\newtheorem{lemma}{Lemma}[section]
\newtheorem{proposition}[lemma]{Proposition}
\newtheorem{theorem}[lemma]{Theorem}
\newtheorem{corollary}[lemma]{Corollary}
\newtheorem{definition}[lemma]{Definition}
\newtheorem{conjecture}[lemma]{Conjecture}
\newcommand{\tr}{\mathrm{tr}}
\newcommand{\R}{\mathbb{R}}
\newcommand{\Z}{\mathbb{Z}}
\newcommand{\sA}{\mathsf{A}}
\title[Lower spectral radius for low-rank matrices]{Generic properties of the lower spectral radius for some low-rank pairs of matrices}
\author{Ian D. Morris}
\begin{document}

\begin{abstract}

The lower spectral radius of a set of $d \times d$ matrices is defined to be the minimum possible exponential growth rate of long products of matrices drawn from that set. When considered as a function of a finite set of matrices of fixed cardinality it is known that the lower spectral radius can vary discontinuously as a function of the matrix entries. In a previous article the author and J. Bochi conjectured that when considered as a function on the set of all pairs of $2 \times 2$ real matrices, the lower spectral radius is discontinuous on a set of positive (eight-dimensional) Lebesgue measure, and related this result to an earlier conjecture of Bochi and Fayad. In this article we investigate the continuity of the lower spectral radius in a simplified context in which one of the two matrices is assumed to be of rank one. We show in particular that the set of discontinuities of the lower spectral radius on the set of pairs of $2 \times 2$ real matrices has positive seven-dimensional Lebesgue measure, and that among the pairs of matrices studied, the finiteness property for the lower spectral radius is true on a set of full Lebesgue measure but false on a residual set.\\ \\

Key words and phrases: joint spectral radius; lower spectral radius; generalised spectral radius. MSC2010: 15A60 (primary), 15A18, 47D03 (secondary)
\end{abstract}

\maketitle


\section{Introduction}
The joint spectral radius $\overline{\varrho}(\mathsf{A})$ of a set $\mathsf{A}$ of $d \times d$ matrices, introduced by G.-C. Rota and G. Strang in 1960 \cite{Ro03,RoSt60}, is defined to be the maximum possible exponential growth rate of a product of matrices drawn from the set $\mathsf{A}$:
\[\overline{\varrho}(\mathsf{A}):=\lim_{n \to \infty}\sup\left\{\|A_n \cdots A_1\|^{\frac{1}{n}} \colon A_i \in \mathsf{A}\right\}=\sup_{(A_i)_{i=1}^\infty \in \mathsf{A}^{\mathbb{N}}} \liminf_{n \to \infty}\|A_n \cdots A_1\|^{\frac{1}{n}}.\]
In the last two decades the joint spectral radius has been the subject of a proliferation of research articles which we will not attempt to list here exhaustively; some prominent recent contributions include \cite{BlNe05,BlThVl03,BoMa02,GuPr13,GuWiZe05,Gu95,HaMoSiTh11,Ju09,LaWa95}. By contrast the lower spectral radius, defined by the expression
\[\underline{\varrho}(\mathsf{A}):=\lim_{n \to \infty}\inf\left\{\|A_n \cdots A_1\|^{\frac{1}{n}} \colon A_i \in \mathsf{A}\right\}=\inf_{(A_i)_{i=1}^\infty \in \mathsf{A}^{\mathbb{N}}} \limsup_{n \to \infty}\|A_n \cdots A_1\|^{\frac{1}{n}}\]
has been studied relatively little, despite its relevance to a range of pure and applied mathematical problems (see e.g. \cite{BlCaJu09,DeKu11,DuSiTh99,Gu95,Mo16,Pr00,TsBl97}).

By comparison with the joint spectral radius, the behaviour of the lower spectral radius is delicate. The joint spectral radius of a finite set of matrices depends continuously on the matrix entries \cite{HeSt95} and there exists an algorithm to determine whether or not $\overline{\varrho}(\mathsf{A})=0$ using a number of steps which grows polynomially with the dimension of the matrices and with the cardinality of the set of matrices being considered (\cite{Gu96}, see also \cite[\S2.3.1]{Ju09}). Conversely the lower spectral radius can exhibit discontinuities (see \cite{BoMo13,Ju09}) and the problem of determining whether or not a pair of $15 \times 15$ integer matrices has lower spectral radius equal to zero is computationally undecidable \cite{CaHaHaNi14}.

A set of matrices $\sA$ is said to satisfy the \emph{finiteness property} for the joint spectral radius $\overline{\varrho}$ -- a situation which in this article will be called the \emph{upper finiteness property} --  if there exists a finite sequence $A_1,\ldots,A_n$ of elements of $\sA$, possibly including repetitions, such that $\overline{\varrho}(\sA)=\rho(A_1\cdots A_n)^{1/n}$, where $\rho$ denotes the ordinary spectral radius of a square matrix. The upper finiteness property has been investigated extensively since it was first defined explicitly in \cite{LaWa95} (and implicitly in \cite{Gu95}) and is believed to hold for ``most'' finite sets of matrices (though see e.g. \cite{BlThVl03,BoMa02,HaMoSiTh11} for examples where the property fails). By analogy we will say that $\sA$ has the \emph{lower finiteness property} if $\underline{\varrho}(\sA)=\rho(A_1 \cdots A_n)^{1/n}$ for some finite sequence $A_1,\ldots,A_n$ of elements of $\sA$. The most powerful methods presently available for computing the upper and lower spectral radii of a finite set of matrices proceed by proving that the set of matrices in question has the upper or lower finiteness property, and by exhibiting precisely a product whose normalised spectral radius $\rho(A_1 \cdots A_n)^{1/n}$  realises the upper or lower spectral radius \cite{GuPr13}. In the interest of the computation of these joint spectral characteristics it is therefore important to understand exactly how frequently these finiteness properties can be expected to hold.

In the article \cite{BoMo15}, J. Bochi and the present author investigated the continuity properties of the lower spectral radius both in the context of compact sets of matrices (with respect to the Hausdorff metric) and as a function of finite sets of matrices with fixed cardinality. As a corollary, results were also obtained on the genericity of the lower finiteness property. In order to state these results fully we require a definition:
\begin{definition}
Let $A_1, A_2$ be $2 \times 2$ real matrices. We will say that $(A_1,A_2)$ is a \emph{resistant pair} if there exist $c,\varepsilon,\gamma>0$ with the following property: for all $n \geq 1$ and for all choices of integers $i_1,\ldots,i_n \in \{1,2\}$, if at most $\varepsilon n$ of the integers $i_k$ are equal to $2$, then $\|A_{i_1}\cdots A_{i_n}\| \geq ce^{\gamma n}$.
\end{definition}
The following conjecture was formulated in \cite{BoFa06}:
\begin{conjecture}[Bochi-Fayad Conjecture]\label{co:bf}
Let $\mathcal{H}$ denote the set of all $2 \times 2$ real matrices with unit determinant and unequal real eigenvalues, and let $\mathcal{R}$ denote the set of all $2 \times 2$ real matrices unit determinant and non-real eigenvalues. Then the set of all resistant pairs $(H,R) \in \mathcal{H}\times\mathcal{R}$ has full Lebesgue measure.
\end{conjecture}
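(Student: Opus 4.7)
The strategy I would pursue is to reduce to a normal form, analyse the products through their action on $\mathbb{RP}^1$, and extract uniform expansion from a Diophantine condition on the rotation angle of $R$. After a conjugation (which preserves the resistant-pair property), I place $H=\mathrm{diag}(\lambda,\lambda^{-1})$ with $\lambda>1$, so that its unstable and stable directions are $\langle e_1\rangle$ and $\langle e_2\rangle$. Any word in $\{H,R\}^n$ with at most $\varepsilon n$ occurrences of $R$ can then be rewritten, by collecting consecutive $R$'s, as
\[H^{n_0}R^{k_1}H^{n_1}R^{k_2}\cdots R^{k_l}H^{n_l},\]
with $\sum n_i \geq (1-\varepsilon)n$, $\sum k_j \leq \varepsilon n$, and $l\leq \varepsilon n$.

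I would then track a unit vector through the product. Each block $H^{n_i}$ multiplies length by a factor comparable to $\lambda^{n_i}|\cos\theta_i|$, where $\theta_i$ is the angle the incoming direction makes with $\langle e_1\rangle$; after this block the direction is at angle $O(\lambda^{-2n_i}\tan\theta_i)$ from $\langle e_1\rangle$. Applying $R^{k_j}$ produces a new direction whose angle to $\langle e_2\rangle$ I denote $\phi_j$. Telescoping the successive expansions yields a lower bound
\[\|A_{i_1}\cdots A_{i_n}\| \geq c_0\, C^{l}\,\lambda^{\sum n_i}\prod_{j=1}^{l}|\sin\phi_j|,\]
where $C$ depends only on $R$ and $c_0$ on the initial direction. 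For $\varepsilon$ sufficiently small this is exponentially large in $n$ provided $\prod_j|\sin\phi_j|$ decays only sub-exponentially.

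Sub-exponential decay of $\prod|\sin\phi_j|$ would be extracted from a Diophantine condition on $R$. After a further conjugation putting $R$ into rotation form by an angle $\alpha$ relative to its invariant inner product, one requires $\|k\alpha-\beta\|\geq c\,k^{-\tau}$ for all $k\geq 1$, where $\beta$ is the angle that $\langle e_2\rangle$ takes in these new coordinates. Such conditions hold for a full Lebesgue-measure set of parameters $\alpha$, and by Fubini for almost every pair $(H,R)\in\mathcal{H}\times\mathcal{R}$. If the $\phi_j$ were precisely $j\alpha-\beta\pmod{\pi}$, the bound would yield $|\sin\phi_j|\geq c'j^{-\tau}$ and hence $\prod_j|\sin\phi_j|\geq c''(l!)^{-\tau}$, which decays only sub-exponentially in $l\leq \varepsilon n$; combining this with the factor $\lambda^{\sum n_i}\geq \lambda^{(1-\varepsilon)n}$ would give the required $ce^{\gamma n}$ for $\varepsilon$ small enough relative to $\log\lambda$.

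The main obstacle is precisely that the angles $\phi_j$ are \emph{not} a pure rotation orbit: they arise from the nested composition of the $R^{k_j}$ with the imperfect $H^{n_i}$-projections onto $\langle e_1\rangle$, and when some $n_i$ are small, or when the block lengths are chosen adversarially, small perturbations can be amplified into near-resonances that defeat the naive Diophantine bound. Controlling these correlated angle-orbits uniformly over \emph{all} admissible block patterns $(n_i,k_j)$, rather than one prescribed pattern, is the essential difficulty, and is in my view why the Bochi--Fayad conjecture has remained open. A complete proof would seem to require a multiscale or KAM-style renormalisation that simultaneously handles all hierarchies of block-length patterns, which is what motivates studying a simplified low-rank version of the problem as the present article does.
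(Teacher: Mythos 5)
This statement is a \emph{conjecture}, and the paper offers no proof of it: the author explicitly writes that ``the resolution of Conjecture \ref{co:bf} seems remote,'' cites only partial results in its direction, and the entire point of the article is to obtain unconditional results on a simplified model (rank-one $H$ paired with elliptic $R$) precisely because the full Bochi--Fayad conjecture is out of reach. There is therefore no proof in the paper to compare your attempt against.

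That said, your write-up is an honest strategy sketch rather than a claimed proof, and its self-diagnosis is accurate. The reduction to block form $H^{n_0}R^{k_1}H^{n_1}\cdots R^{k_l}H^{n_l}$, the projective bookkeeping of angles, and the wish to invoke a Diophantine condition on the rotation number of $R$ are all in the spirit of the partial results in the literature (and of the special case, mentioned in the paper, where $R$ is a root of the identity). You correctly identify the genuine obstruction: the angles $\phi_j$ are \emph{not} an orbit of a fixed circle rotation but the output of a composition whose error terms depend on the preceding block lengths $n_i$, and an adversarial choice of the pattern $(n_i,k_j)$ can engineer near-cancellations that a single Diophantine inequality does not rule out. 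Controlling this over \emph{all} admissible block hierarchies uniformly is exactly what is missing, and no renormalisation scheme is supplied. So the proposal is not a proof and does not claim to be one; it is a correct description of the shape of the problem and of the gap, which in this case is the right answer, because the statement is open.

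If you want to contribute something provable, the paper's own route is instructive: replace $\mathcal{H}$ by the rank-one set $\mathcal{P}$, where $H^2=\lambda H$ collapses the nested composition into a single scalar product $\lambda^{\sum n_i}\prod_i(\cos m_i\theta+\alpha\lambda^{-1}\sin m_i\theta)$ (Lemma \ref{le:explicittrace}), so the angles \emph{do} become a genuine rotation orbit and the Diophantine/continued-fraction machinery (Lemmas \ref{le:ratio}, \ref{le:ae-lemma-2}, \ref{le:main-lemma-ctd-frn}) can be applied cleanly. That is exactly the simplification that makes Theorem \ref{th:only} accessible while Conjecture \ref{co:bf} remains open.
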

Some partial results in the direction of this conjecture may be found in \cite{AlSeUn11,AvRo09,FaKr08}; we note that resistant pairs $(H,R) \in \mathcal{H}\times \mathcal{R}$ can easily be constructed explicitly in the special case where $R^n$ is equal to the identity for some integer $n \geq 1$, see \cite{BoMo15}. Let $M_2(\mathbb{R})$ denote the set of all $2 \times 2$ real matrices. J. Bochi and the author have proved the following result:
\begin{theorem}[\cite{BoMo15}]\label{th:bm}
Define
\[\mathcal{U}:=\left\{(\alpha H, \beta R)\colon H \in \mathcal{H}, R \in \mathcal{R}\text{ and }0<\alpha<\beta\right\}\subset M_2(\mathbb{R})^2.\]
Then there is a dense $G_\delta$ subset of $\mathcal{U}$ in which the lower finiteness property fails. If additionally the Bochi-Fayad conjecture is true, then the set of discontinuities of the lower spectral radius on $\mathcal{U}$ has full Lebesgue measure.
\end{theorem}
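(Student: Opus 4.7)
The plan is to handle the two assertions separately using the upper semicontinuity of $\underline{\varrho}$ on $\mathcal{U}$.

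\textbf{First assertion.} The lower spectral radius is upper semicontinuous on $\mathcal{U}$, being an infimum of the continuous functions $\sA \mapsto \|A_{i_n}\cdots A_{i_1}\|^{1/n}$. For each finite word $w = (i_1,\ldots,i_n) \in \{1,2\}^n$, define the continuous function $\phi_w(\alpha H, \beta R) := \rho(A_{i_1}\cdots A_{i_n})^{1/n}$, with $A_1 := \alpha H$, $A_2 := \beta R$. Since $\underline{\varrho} \leq \phi_w$ always, the equality set $E_w := \{\sA : \underline{\varrho}(\sA) = \phi_w(\sA)\}$ coincides with $\{\sA : \underline{\varrho}(\sA) \geq \phi_w(\sA)\}$ and is therefore closed. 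The lower finiteness property holds precisely on the $F_\sigma$ set $\bigcup_w E_w$; to obtain a dense $G_\delta$ on which it fails, I would show each $E_w$ has empty interior. Given $(\alpha H, \beta R) \in E_w$, I would perturb the rotation angle of $R$ slightly to obtain $R' \in \mathcal{R}$ and exhibit a long word $v$ such that iterating a pattern alternating long runs of $\alpha H$ with individual $\beta R'$'s brings the iterate arbitrarily close to the contracting eigendirection of $H$ after each $\beta R'$, producing $\rho(\tilde{v}(\alpha H, \beta R'))^{1/\ell} < \phi_w(\alpha H, \beta R')$. The main obstacle is the quantitative balance between the size of the perturbation and the alignment precision required to beat the fixed value $\phi_w$.

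\textbf{Second assertion.} Under the Bochi--Fayad conjecture the set $\mathcal{V} \subset \mathcal{U}$ of $(\alpha H, \beta R)$ with $(H, R)$ resistant has full Lebesgue measure; it suffices to show $\underline{\varrho}$ is discontinuous at every point of $\mathcal{V}$. The argument rests on a two-regime computation at resistant pairs: at $(\alpha H, \beta R) \in \mathcal{V}$ with parameters $c, \varepsilon, \gamma$, products with $R$-density $k/n \leq \varepsilon$ satisfy, via the resistance bound and the scalar factor $\alpha^{n-k}\beta^k \geq \alpha^n$,
\[
\|A_{i_1}\cdots A_{i_n}\|^{1/n} \geq c^{1/n} e^\gamma \alpha,
\]
while products with $R$-density $> \varepsilon$ satisfy, via $\|M\| \geq |\det M|^{1/2}$ and $|\det A_{i_1}\cdots A_{i_n}| = \alpha^{2(n-k)}\beta^{2k}$,
\[
\|A_{i_1}\cdots A_{i_n}\|^{1/n} \geq \alpha^{1-k/n}\beta^{k/n} \geq \alpha(\beta/\alpha)^\varepsilon.
\]
Hence $\underline{\varrho}(\alpha H, \beta R) \geq \min\bigl(e^\gamma\alpha,\ \alpha(\beta/\alpha)^\varepsilon\bigr) > \alpha$. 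Conversely, at a non-resistant pair the negation of resistance produces, for any $c,\varepsilon,\gamma > 0$ small and some $n$, a sparse-$R$ product whose scaled normalized norm is bounded above by $c^{1/n}\alpha(\beta/\alpha)^\varepsilon e^\gamma$, which can be made arbitrarily close to $\alpha$; combined with the determinant bound this yields $\underline{\varrho}(\alpha H, \beta R) = \alpha$ at non-resistant pairs.

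The remaining ingredient is the density of non-resistant pairs in $\mathcal{U}$. For this I would argue that for any $(\alpha H, \beta R) \in \mathcal{U}$ and any $m \geq 1$, one can perturb $R$ by an amount that tends to zero as $m \to \infty$ so that the perturbed $R'$ satisfies $|\operatorname{tr}(H^m R')| \leq 2$, making $H^m R'$ elliptic with eigenvalues on the unit circle; then the powers $(H^m R')^k$ constitute a sparse-$R'$ family of products with subexponentially bounded norm, so the perturbed pair is non-resistant. Combining this density with the strict gap $\underline{\varrho} > \alpha$ at resistant pairs versus $\underline{\varrho} = \alpha$ at non-resistant ones, upper semicontinuity of $\underline{\varrho}$ forces a discontinuity at every point of $\mathcal{V}$, so the set of discontinuities contains $\mathcal{V}$ and has full Lebesgue measure. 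The main technical obstacle is making the density construction quantitative so that the tuning $|\operatorname{tr}(H^m R')| \leq 2$ can be achieved with a perturbation of arbitrarily small size uniformly over starting pairs.
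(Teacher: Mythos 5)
This theorem is stated in the paper as a quotation from the earlier reference \cite{BoMo15} and is not proved here, so there is no argument in the present manuscript against which to compare your proposal; I can only evaluate your sketch on its own terms.

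The framework you set up is the right one: upper semicontinuity of $\underline{\varrho}$, the $F_\sigma$ structure of the set where the lower finiteness property holds (so that its complement is $G_\delta$), and the two-regime estimate at resistant pairs giving $\underline{\varrho}(\alpha H,\beta R)\geq\alpha\min\bigl(e^\gamma,(\beta/\alpha)^\varepsilon\bigr)>\alpha$, combined with the determinant bound $\underline{\varrho}\geq\alpha$ and the fact that non-resistance forces $\underline{\varrho}=\alpha$. All of that is sound, and it correctly reduces both assertions to density statements.

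The density statements are exactly where the argument is incomplete, and in the second case the proposed construction has a structural flaw, not merely a quantitative one. You want to perturb $R$ to $R'$ so that $|\operatorname{tr}(H^m R')|\leq 2$ for some large $m$. Write $H^m=\lambda^m v_1u_1^\top+\lambda^{-m}v_2u_2^\top$ with $\lambda>1$; then $\operatorname{tr}(H^m R')=\lambda^m\,u_1^\top R'v_1+O(\lambda^{-m})$, so the condition $|\operatorname{tr}(H^m R')|\leq 2$ confines $R'$ to an $O(\lambda^{-m})$-neighbourhood of the fixed codimension-one set $\{u_1^\top R'v_1=0\}$ in $\mathcal{R}$. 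In terms of the rotation parameter $\phi$ (writing $R_\phi$ conjugate to rotation by $\phi$), the zeros of $\phi\mapsto\operatorname{tr}(H^m R_\phi)$ converge as $m\to\infty$ to a fixed pair of angles $\psi^\ast,\psi^\ast+\pi$; they do not equidistribute. Consequently, for a generic starting $R$ whose angle is bounded away from $\psi^\ast\pmod\pi$, the perturbation needed to reach the elliptic window is bounded below uniformly in $m$, and your construction does not produce nearby non-resistant pairs at all. The missing density of non-resistant pairs in $\mathcal{U}$ (or, what suffices, density of pairs with $\underline{\varrho}=\alpha$) therefore needs a genuinely different mechanism. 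The same comment applies to the first assertion: you correctly reduce it to showing each closed set $E_w$ has empty interior, but the perturbation producing a product of smaller normalised spectral radius is left as an acknowledged gap, and the naive ``align into the contracting direction with one $R$'' idea runs into the same obstruction that sparse-$R$ words cannot generically kill the exponential growth of $H^n$ by a single rotation.

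So: correct skeleton, correct reduction, correct resistance computation, but both density lemmas — the actual content of the theorem — are missing, and the one construction you propose for them does not work.
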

Since $\mathcal{U}$ is open, this result motivates in particular the following conjecture:
\begin{conjecture}\label{co:cks}
The set of discontinuities of $\underline{\varrho}\colon M_2(\mathbb{R})^2\to\mathbb{R}$ has Hausdorff dimension $8$. 
\end{conjecture}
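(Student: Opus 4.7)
The plan is to use Theorem~\ref{th:bm} as the primary engine and reduce Conjecture~\ref{co:cks} to a partial resolution of the Bochi-Fayad conjecture. Since $\mathcal{U}$ is an $8$-dimensional open subset of $M_2(\mathbb{R})^2$, any subset of $\mathcal{U}$ of full Hausdorff dimension in $\mathcal{U}$ already has Hausdorff dimension $8$ in $M_2(\mathbb{R})^2$. The construction underlying Theorem~\ref{th:bm} is pointwise in nature: from any resistant pair $(H,R) \in \mathcal{H} \times \mathcal{R}$ and any $0 < \alpha < \beta$ it produces a discontinuity $(\alpha H,\beta R)$ of $\underline{\varrho}$. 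Consequently this construction should upgrade a set of resistant pairs in $\mathcal{H}\times\mathcal{R}$ of Hausdorff dimension $s$ to a set of discontinuities in $\mathcal{U}$ of Hausdorff dimension $s+2$, and it therefore suffices to exhibit a set of resistant pairs of Hausdorff dimension $6$.

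First I would take the explicit resistant pairs with $R^n = I$ from \cite{BoMo15} as ``seed'' examples. Since the elements of $\mathcal{R}$ with a fixed finite order form $2$-dimensional conjugacy classes inside the $3$-dimensional space $\mathcal{R}$, these examples form a countable union of $5$-dimensional subsets of $\mathcal{H}\times\mathcal{R}$ and by themselves yield only a set of discontinuities of Hausdorff dimension $7$, which is insufficient. The crucial second step would be to extend resistance to perturbations away from the finite-order locus by a Diophantine perturbation argument in the spirit of \cite{AlSeUn11,AvRo09,FaKr08}: for $R$ whose rotation number satisfies a sufficiently strong Diophantine condition, and $H$ in a suitable non-degenerate open subset of $\mathcal{H}$, I would attempt a quantitative cocycle estimate showing that every product $A_{i_1}\cdots A_{i_n}$ in which at most $\varepsilon n$ factors equal $R$ has norm at least $ce^{\gamma n}$. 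Heuristically such a product should behave like a perturbation of $H^n$ along the unstable direction of $H$, and the Diophantine condition on $R$ should prevent the rotation angles from accumulating to cancel this dominant expansion. Since the set of $R \in \mathcal{R}$ with Diophantine rotation number has full Lebesgue measure, a successful implementation would deliver resistant pairs of full Hausdorff dimension $6$.

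The principal obstacle is clearly this second step, which is essentially a weak form of the Bochi-Fayad conjecture and therefore open. The partial results in the cited literature either treat one-parameter families of perturbations or require $R$ to be a very small perturbation of a rational rotation together with additional geometric assumptions on $H$, and none of them appear to yield a positive-Hausdorff-dimension set of pairs directly. Overcoming this would presumably require a new large-deviation-type bound for products of $H$ and $R$, controlling the alignment between the angles produced by iterates of $R$ and the unstable direction of $H$ uniformly over the combinatorics of the word $A_{i_1}\cdots A_{i_n}$; any such estimate would constitute an advance on the Bochi-Fayad conjecture itself.
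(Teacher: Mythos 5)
The statement you were asked to prove is Conjecture~\ref{co:cks}, which the paper does not prove; the paper establishes only the weaker lower bound of $7$ (the Corollary following Theorem~\ref{th:only}), and it does so by a completely different route. Your proposal, as you yourself acknowledge in its final paragraph, does not constitute a proof either: the plan reduces the conjecture to exhibiting a Hausdorff dimension $6$ set of resistant pairs in $\mathcal{H}\times\mathcal{R}$, and then observes that producing such a set is essentially a weak form of the open Bochi--Fayad conjecture. The first step of your plan (finite-order rotations) tops out at dimension $5$ in $\mathcal{H}\times\mathcal{R}$, hence dimension $7$ in $\mathcal{U}$, exactly matching what the paper already obtains by other means; the second step, the Diophantine cocycle estimate over a full-measure set of rotation numbers, is the missing ingredient and is not supplied. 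So there is a genuine gap, and it is precisely the one you flag: no positive-Hausdorff-dimension set of non-finite-order resistant pairs is currently known, and the cited partial results do not provide one.

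For comparison, the paper's strategy for its partial result avoids resistance and the Bochi--Fayad conjecture entirely. Rather than working inside the $8$-dimensional set $\mathcal{U}$ of hyperbolic--elliptic pairs, it passes to the $7$-dimensional submanifold $\mathcal{P}\times\mathcal{E}$ where the first matrix has rank one. After the normal form of Lemma~\ref{le:similar}, the lower spectral radius collapses, via Lemma~\ref{le:formula}, to the explicit scalar quantity $\inf_{n\geq 0}|\lambda\cos n\theta + \alpha\sin n\theta|^{1/(n+1)}$, whose behaviour can be analysed directly by elementary Diophantine and measure-theoretic arguments (Lemmas~\ref{le:ae-lemma-1}--\ref{le:ae-lemma-3} and the continued-fraction constructions of \S5). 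The payoff is a set of discontinuities of full Lebesgue measure in $\mathcal{P}\times\mathcal{E}$, hence Hausdorff dimension $7$, unconditionally. This route buys an unconditional but weaker bound; your route would, if the Diophantine estimate were available, close the gap to $8$, but at present it remains conditional on progress toward Bochi--Fayad. The conjecture itself is left open by the paper, and your proposal correctly identifies the remaining obstruction rather than resolving it.
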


At the present time the resolution of Conjecture \ref{co:bf} seems remote. The purpose of this article is to investigate what partial results in the direction of Conjecture \ref{co:cks} may be obtained without the resolution of Conjecture \ref{co:bf}. In the process of this investigation we have also obtained results on the lower finiteness property.

Let us write $\mathcal{P}$ for the set of all $2\times2$ real matrices with rank one which are not nilpotent, and $\mathcal{E}$ for the set of all $2\times 2$ real matrices with non-real eigenvalues. In this article we investigate the size of the set of discontinuities of $\underline{\varrho}$ and the prevalence of the lower finiteness property on the $7$-dimensional manifold $\mathcal{P}\times\mathcal{E} \subset M_2(\mathbb{R})^2$, where the lower spectral radius is much more easily understood than on the set investigated in Theorem \ref{th:bm}. The situation which emerges is surprisingly subtle: we find that the set of discontininuities of $\underline{\varrho}$ on $\mathcal{P}\times\mathcal{E}$, and the set of pairs $(A_1,A_2)\in\mathcal{P}\times\mathcal{E}$ which satisfy the lower finiteness property, are both dense subsets of $\mathcal{P}\times\mathcal{E}$ with full Lebesgue measure; but on the other hand their complements are dense $G_\delta$ sets. The set of discontinuities in this set may therefore be seen as large in the sense of measure theory, but small in the sense of point-set topology, and similarly for the set in which the lower finiteness property holds. Yet furthermore, there exist dense subsets of $\mathcal{P}\times\mathcal{E}$ where $\underline{\varrho}$ is continuous but the lower finiteness property holds, and where $\underline{\varrho}$ is discontinuous but the lower finiteness property fails. We prove the following theorem:
\begin{theorem}\label{th:only}
Let $\mathcal{P} \subset M_2(\R)$ denote the set of all non-nilpotent rank-one matrices and $\mathcal{E}\subset M_2(\R)$ the set of all matrices with non-real eigenvalues, and define four subsets $\mathcal{U}_1,\mathcal{U}_2,\mathcal{U}_3,\mathcal{U}_4$ of $\mathcal{P}\times\mathcal{E}$ as follows:
\begin{itemize}
\item
$\mathcal{U}_1$ is the set of all $\mathsf{A} \in \mathcal{P}\times\mathcal{E}$ such that $\underline{\varrho}(\mathsf{A})=0$, $\underline{\varrho}$ is continuous at $\mathsf{A}$, and $\mathsf{A}$ \emph{does not} have the lower finiteness property.
\item
$\mathcal{U}_2$ is the set of all $\mathsf{A} \in \mathcal{P}\times\mathcal{E}$ such that $\underline{\varrho}(\mathsf{A})=0$, $\underline{\varrho}$ is continuous at $\mathsf{A}$, and $\mathsf{A}$ \emph{does} have the lower finiteness property.

\item
$\mathcal{U}_3$ is the set of all $\mathsf{A} \in \mathcal{P}\times\mathcal{E}$ such that $\underline{\varrho}(\mathsf{A})>0$, $\underline{\varrho}$ is discontinuous at $\mathsf{A}$, and $\mathsf{A}$ \emph{does not} have the lower finiteness property.

\item
$\mathcal{U}_4$ is the set of all $\mathsf{A} \in \mathcal{P}\times\mathcal{E}$ such that $\underline{\varrho}(\mathsf{A})>0$, $\underline{\varrho}$ is discontinuous at $\mathsf{A}$, and $\mathsf{A}$ \emph{does} have the lower finiteness property.
\end{itemize}
Then $\mathcal{P}\times\mathcal{E}$ is equal to the union of the four sets $\mathcal{U}_i$, and each of the four sets $\mathcal{U}_i$ is dense in $\mathcal{P}\times\mathcal{E}$. The set $\mathcal{U}_1$ is equal to a countable intersection of dense open subsets of $\mathcal{P}\times\mathcal{E}$: conversely, the set $\mathcal{U}_4$ is of full Lebesgue measure.
\end{theorem}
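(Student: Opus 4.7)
\emph{Reduction to a one-dimensional problem.} Since $A_1\in\mathcal{P}$ has rank one and is not nilpotent, I will write $A_1=vu^T$ with $u^Tv\neq 0$; the identity $A_1 X A_1=(u^TXv)A_1$ collapses any product of $A_1$ and $A_2$ containing two or more copies of $A_1$ into a scalar multiple of a much shorter product. Consequently every nontrivial product is cyclically equivalent to one of the form $A_1 A_2^{k_1}\cdots A_1 A_2^{k_n}$, and iterating the identity yields
\[\rho\bigl(A_1 A_2^{k_1}\cdots A_1 A_2^{k_n}\bigr)=\prod_{i=1}^n\bigl|u^T A_2^{k_i} v\bigr|.\]
Writing $A_2=\lambda P R_\theta P^{-1}$ with $\lambda=\sqrt{|\det A_2|}>0$ and $\theta\in(0,\pi)$ gives $u^T A_2^k v=\lambda^k C\cos(k\theta+\phi_0)$ for constants $C\neq 0$ and $\phi_0$ depending on $\mathsf{A}$. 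Setting $t_k:=|u^T A_2^k v|^{1/(k+1)}$, I expect to obtain
\[\underline{\varrho}(\{A_1,A_2\})=\min\bigl(\lambda,\,\inf_{k\geq 0}t_k\bigr)\]
by combining the spectral-radius formula (which, with $\|P\|\geq\rho(P)$ and a weighted geometric-mean inequality applied to the expression above, yields the lower bound) with the upper bound supplied by the periodic test sequences $A_2^\infty$ and $(A_1 A_2^k)^\infty$.

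\emph{Covering and the lower finiteness property.} Upper semicontinuity of $\underline{\varrho}$ together with $\underline{\varrho}\geq 0$ gives continuity at every $\mathsf{A}$ with $\underline{\varrho}(\mathsf{A})=0$. Conversely, at any $\mathsf{A}$ with $\underline{\varrho}(\mathsf{A})>0$, I perturb $A_2$ within $\mathcal{E}$ to a nearby $A_2'$ whose rotation angle $\theta'$ satisfies $k_0\theta'+\phi_0\in\tfrac{\pi}{2}+\pi\mathbb{Z}$ for some large $k_0$; then $A_1(A_2')^{k_0}$ is nilpotent, so $\underline{\varrho}(\mathsf{A}')=0$ and discontinuity follows. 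The $\mathcal{U}_i$ therefore partition $\mathcal{P}\times\mathcal{E}$ according to whether $\underline{\varrho}=0$ or $>0$ and whether LFP holds. LFP holds at $\mathsf{A}$ precisely when either $\lambda\leq\inf_k t_k$ (achieved by $A_2$) or $\inf_k t_k$ is attained at some finite $k_\ast$ (achieved by $A_1 A_2^{k_\ast}$); in the subcase $\underline{\varrho}(\mathsf{A})=0$ this reduces to the existence of $k_0$ with $\cos(k_0\theta+\phi_0)=0$.

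\emph{Density, the $G_\delta$ structure, and full measure.} Each $\mathcal{U}_i$ is made dense by perturbing $\theta$ to realize a prescribed arithmetic condition: $\mathcal{U}_2$ by hitting an exact zero of $\cos(k\theta+\phi_0)$, a dense codimension-one condition; $\mathcal{U}_1$ by a Liouville-type perturbation producing $|\cos(k_n\theta+\phi_0)|$ decaying faster than every geometric rate along a subsequence while remaining nonzero; $\mathcal{U}_3$ by a Cantor-style inductive construction in which $|\cos(k_n\theta+\phi_0)|$ approaches $\lambda^{-\alpha k_n}$ strictly from above for some fixed $\alpha\in(0,1)$, so that $\inf_k t_k\in(0,\lambda)$ is approached but never attained; and $\mathcal{U}_4$ by a Diophantine $\theta$ off the countable locus of vanishing cosines. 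The set $\mathcal{U}_1$ is $G_\delta$ because $\{\underline{\varrho}=0\}=\bigcap_n\bigcup_k\{t_k<1/n\}$ (each $t_k$ is continuous in $\mathsf{A}$) and $\{u^TA_2^kv\neq 0\text{ for all }k\}=\bigcap_k\{u^TA_2^kv\neq 0\}$ are both $G_\delta$; density of each constituent open set follows from the perturbation argument used for $\mathcal{U}_1$. For $\mathcal{U}_4$: since Lebesgue-almost-every $\theta$ is Diophantine, $|\cos(k\theta+\phi_0)|$ decays at worst polynomially, hence $t_k\to\lambda$; then only finitely many $k$ satisfy $t_k<\lambda-\varepsilon$, forcing $\inf_k t_k$ to be attained. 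Combining this with the measure-zero locus where some cosine vanishes, a Fubini argument places Lebesgue-almost-every $\mathsf{A}$ in $\mathcal{U}_4$.

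\emph{Main obstacle.} The hardest ingredient is the density of $\mathcal{U}_3$: to realize this set one must simultaneously arrange that the sequence $|\cos(k\theta+\phi_0)|$ is bounded away from zero at all $k$ (so that $\inf_k t_k>0$), approaches a prescribed positive limit along a subsequence (so that $\inf_k t_k<\lambda$), and does so strictly from above (so that $\inf_k t_k$ is never attained). I expect this to require a careful inductive construction of Liouville-like $\theta$ in which the Diophantine exponent is controlled at each step, a more delicate analogue of the standard construction of Liouville numbers.
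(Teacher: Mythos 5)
Your framework matches the paper's essentially step for step: the reduction of any nontrivial product to $\rho(A_1A_2^{k_1}\cdots A_1A_2^{k_n})=\prod_i|u^TA_2^{k_i}v|$ (the paper arrives at the same by putting $H$ in the form $\begin{pmatrix}\lambda&\alpha\\0&0\end{pmatrix}$ via similarity, but this is cosmetic), the resulting formula $\underline{\varrho}=\inf_k t_k$ with $t_k=|u^TA_2^kv|^{1/(k+1)}$ (your $\min$ with $\lambda$ is redundant since $\limsup_k t_k\leq\lambda$), the upper-semicontinuity argument at zeros, density of exact zeros for $\mathcal{U}_2$, the $G_\delta$ structure of $\mathcal{U}_1$, the Diophantine/Borel--Cantelli argument for full measure of $\mathcal{U}_4$, and the characterization of LFP as either $\underline{\varrho}=\rho(A_2)$ or the infimum being attained. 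All of this is correct and is what the paper does.

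The genuine gap is exactly what you flag as the ``main obstacle'': the density of $\mathcal{U}_3$. This is not a sentence of the argument --- it is the single most technical part of the paper, occupying all of \S5 and roughly a third of its length. To produce a dense set of $\theta$ for which $\inf_k|C\cos(k\theta+\phi_0)|^{1/(k+1)}$ is positive yet never attained, the paper constructs $\theta$ by a continued-fraction scheme (Lemmas~\ref{le:minor-lemma-ctd-frn} and~\ref{le:main-lemma-ctd-frn}): it first forces the congruence condition $p_N+a_0q_N\equiv p_{N-1}+a_0q_{N-1}\equiv a\pmod b$ so that the nearest element of $a+b\mathbb{Z}$ to $q_n\theta$ is hit exactly at the convergent denominators, then chooses the partial quotients $a_{n+1}$ to grow at a precisely calibrated super-geometric rate so that each $t_{q_n}$ is strictly undercut by a later $t_{q_{n+1}}$ while $\sum 1/q_n$ converges fast enough to keep the infimum positive; the best-approximation theorems for continued fractions are what guarantee no $k$ \emph{outside} the subsequence $\{q_n\}$ spoils the non-attainment. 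Your sketch gestures at this (``Cantor-style inductive construction'', ``Liouville-like $\theta$'') but does not supply the mechanism controlling $t_k$ off the chosen subsequence, nor the explicit rate choice that simultaneously keeps $\inf_k t_k>0$; also, ``$|\cos(k_n\theta+\phi_0)|$ approaches $\lambda^{-\alpha k_n}$'' is dimensionally off --- what is needed is $|\cos(k_n\theta+\phi_0)|\asymp e^{-ck_n}$ for a fixed $c>0$, approached strictly from above. In short, the architecture you propose is the right one, but this lemma is the substance of the theorem and must be carried out; as written, your proof is incomplete precisely there.
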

Since $\mathcal{P}\times\mathcal{E}$ is a seven-dimensional submanifold of the eight-dimensional linear space $M_2(\mathbb{R})^2$, we immediately obtain the following result in the direction of Conjecture \ref{co:cks}:
\begin{corollary}
The set of discontinuities of the lower spectral radius $\underline{\varrho}\colon M_2(\mathbb{R})^2\to\mathbb{R}$ has Hausdorff dimension at least $7$. 
\end{corollary}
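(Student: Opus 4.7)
The plan is to deduce the corollary directly from Theorem \ref{th:only} by a restriction-of-continuity argument. The key elementary observation is that if $\underline{\varrho}\colon M_2(\R)^2\to\R$ is continuous at a point $\sA\in\mathcal{P}\times\mathcal{E}$, then its restriction $\underline{\varrho}\vert_{\mathcal{P}\times\mathcal{E}}$ is also continuous at $\sA$. Contrapositively, every discontinuity of the restriction lies in the discontinuity set $D\subset M_2(\R)^2$ of the ambient function. Combined with Theorem \ref{th:only} this yields $D\supseteq\mathcal{U}_3\cup\mathcal{U}_4\supseteq\mathcal{U}_4$.

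Next I would verify that $\mathcal{P}\times\mathcal{E}$ is a $7$-dimensional $C^\infty$ embedded submanifold of the $8$-dimensional linear space $M_2(\R)^2$. The factor $\mathcal{E}$ is cut out of $M_2(\R)$ by the strict inequality $(\tr A)^2 - 4\det A < 0$, so it is open of dimension $4$. The factor $\mathcal{P}$ consists of matrices $A$ with $\det A = 0$ and $\tr A\neq 0$ (non-nilpotence of a rank-one $2\times 2$ matrix being equivalent to nonzero trace). Since $\det\colon M_2(\R)\to\R$ is a submersion away from the zero matrix, and the condition $\tr A\neq 0$ excludes that zero matrix, $\mathcal{P}$ is a $3$-dimensional embedded submanifold. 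Thus $\mathcal{P}\times\mathcal{E}$ has the claimed $7$-dimensional structure.

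Finally, by Theorem \ref{th:only} the set $\mathcal{U}_4$ has full Lebesgue measure inside the $7$-manifold $\mathcal{P}\times\mathcal{E}$, in particular positive $7$-dimensional volume on that manifold. Since local $C^1$ parametrisations of a smooth submanifold of Euclidean space are bi-Lipschitz on compact sets, $\mathcal{U}_4$ then has positive $7$-dimensional Hausdorff measure with respect to the metric inherited from $M_2(\R)^2$. Consequently $\dim_H\mathcal{U}_4\geq 7$, and together with $\mathcal{U}_4\subset D$ this gives $\dim_H D\geq 7$. I do not expect any serious obstacle: the corollary reduces to the trivial restriction principle for continuity plus the standard fact that positive volume on a $d$-dimensional $C^1$ submanifold of $\R^N$ implies positive $d$-dimensional Hausdorff measure in the ambient metric, with Theorem \ref{th:only} supplying the only substantive ingredient.
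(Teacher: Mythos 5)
Your proof is correct and follows essentially the same route as the paper, which states the corollary follows immediately from Theorem~\ref{th:only} once one observes that $\mathcal{P}\times\mathcal{E}$ is a seven-dimensional submanifold of the eight-dimensional space $M_2(\R)^2$; you have simply spelled out the details the paper leaves implicit (the restriction-of-continuity principle to pass from the theorem's statement about the restricted function to the ambient $\underline{\varrho}$, the submersion argument identifying $\mathcal{P}$ as a $3$-manifold, and the bi-Lipschitz-parametrisation step converting positive $7$-dimensional surface measure into positive $7$-dimensional Hausdorff measure in $M_2(\R)^2$). No gaps.
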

The proof of Theorem \ref{th:only} is essentially divided into four parts: some common preliminaries, an analysis of the sets $\mathcal{U}_1$ and $\mathcal{U}_2$, an analysis of the set $\mathcal{U}_4$, and an analysis of the set $\mathcal{U}_3$. The last of these parts is by far the most intricate, and indeed is similar in length to the other three parts combined. The remainder of this paper follows the structure of this proof: in \S2 we prove certain preliminary statements; in \S3 we examine the case $\underline{\varrho}(\mathsf{A})=0$; in \S4 and \S5 we examine $\mathcal{U}_4$ and $\mathcal{U}_3$ respectively; and in \S6, all of these results are synthesised into the proof of Theorem \ref{th:only}.

\section{Preliminaries}

In this section we show that a general pair $(H,R) \in \mathcal{P}\times\mathcal{E}$ may be reduced by rescaling and a change of basis to the situation in which $R$ is a rotation and $H$ has only zero entries in its bottom row. Using this reduction we obtain a formula for the lower spectral radius which will be used in all subsequent sections. We begin with the following fundamental observation:
\begin{lemma}\label{le:similar}
Let $(H,R) \in \mathcal{P} \times \mathcal{E}$. Then there exists $\gamma>0$ such that $\gamma^{-1}(H,R)$ is simultaneously similar to a pair of matrices of the form
\[\left(\left(\begin{array}{cc}\lambda & \alpha \\0&0\end{array}\right),\left(\begin{array}{cc}\cos \theta & -\sin \theta\\\sin\theta&\cos \theta\end{array}\right)\right)\]
where $\lambda,\alpha,\theta \in \mathbb{R}$ and $\lambda \neq 0$.
\end{lemma}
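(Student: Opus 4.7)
The proof will rest on two elementary facts about $2 \times 2$ real matrices: first, that any matrix with non-real eigenvalues is conjugate over $\mathbb{R}$ to a scalar multiple of a rotation; and second, that the similarity orbit of a rank-one non-nilpotent matrix under orthogonal conjugation contains matrices with vanishing bottom row. The strategy is to apply these facts in sequence, using in the second stage the fact that the centraliser of a nontrivial rotation contains the entire rotation group, so that further rotational conjugations may be performed without disturbing the normal form obtained for $R$.

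The first step is to rescale. Since $R \in \mathcal{E}$, its eigenvalues have the form $r e^{\pm i\theta}$ for some $r > 0$ and some $\theta \in (0,\pi)$; set $\gamma := r$. Then $\gamma^{-1}R$ has eigenvalues $e^{\pm i\theta}$, and a standard real Jordan form argument produces an invertible matrix $P \in GL_2(\mathbb{R})$ such that $P^{-1}(\gamma^{-1}R)P$ equals the rotation matrix appearing in the statement. Replacing $(H,R)$ by $P^{-1}\gamma^{-1}(H,R)P$, we may assume $R$ is already a rotation $R_\theta$ and that $H$ is some rank-one non-nilpotent matrix, which we write as $H = uv^T$ with $u,v \in \mathbb{R}^2\setminus\{0\}$ and $v^T u = \tr(H) \neq 0$.

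The second step is to rotate so that the image of $H$ lies along the first coordinate axis. Choose a rotation matrix $Q = R_\phi$ with $Qu \in \mathbb{R}\cdot e_1$. Then
\[
Q H Q^{-1} = (Qu)(v^TQ^{-1}) = e_1 w^T = \begin{pmatrix} w_1 & w_2 \\ 0 & 0 \end{pmatrix}
\]
for the vector $w := Q^{-T}v\cdot c$ (absorbing the scalar $c$ with $Qu = ce_1$), which has the required shape. Because rotations commute, $Q R_\theta Q^{-1} = R_\theta$, so the normal form for $R$ is preserved. Finally, since the trace is conjugation-invariant, $w_1 = \tr(QHQ^{-1}) = \tr(H) \neq 0$, giving the required non-vanishing of $\lambda := w_1$, and setting $\alpha := w_2$ completes the reduction.

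There is no serious obstacle in this lemma: both ingredients (real Jordan form for $R$ and rotational alignment of the image of $H$) are standard. The only mild subtlety, which the proof must make explicit, is the verification that the second conjugation really can be taken to be a rotation rather than a general similarity, since only rotations preserve the normal form of $R_\theta$; this is immediate from the abelianness of $SO(2)$.
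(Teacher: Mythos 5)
Your proof is correct and follows essentially the same route as the paper: first rescale and conjugate to make $R$ a rotation, then use a further rotational conjugation (harmless because $SO(2)$ is abelian) to align $H$ with the first coordinate axis. The only cosmetic difference is that you align the image $\mathbb{R}u$ of $H=uv^T$ with $e_1$, which makes the whole bottom row vanish at once, whereas the paper aligns the $\lambda$-eigenspace (which is the same subspace) and then invokes $\det H=0$ to kill the remaining $(2,2)$ entry.
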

\begin{proof}
Let $H,R \in \mathcal{P}\times \mathcal{E}$. By dividing both matrices by $(\det R)^{-1/2}$ if necessary we may assume that $\det R=1$; subject to this assumption we will show that the pair $(H,R)$ is similar to a pair of the desired form.

Since $R$ is a real matrix with unit determinant and complex eigenvalues, there exists an invertible matrix $A$ such that $ARA^{-1}$ is a rotation matrix. Let us write $H':=AHA^{-1}$. Since $H'$ has trace $\lambda \neq 0$ and determinant $0$, it has a one-dimensional eigenspace corresponding to the eigenvalue $\lambda$. It is clear that by conjugating $H'$ by a suitable rotation $B$ we may obtain a matrix $H'':=BH'B^{-1}$ for which the horizontal axis is an eigenspace corresponding to the eigenvalue $\lambda$; since rotations commute with one another, this additional conjugation leaves the rotation matrix $ARA^{-1}$ unchanged. Since $H''$ preserves the horizontal axis we may write
\[H''=\left(\begin{array}{cc}\lambda & \alpha \\0&\beta\end{array}\right)\]
for real numbers $\alpha$ and $\beta$, but since $\det H'' = \det H =0$ and $\lambda \neq 0$ we must have $\beta=0$. 
\end{proof}

\begin{lemma}\label{le:explicittrace}
Let $H \in \mathcal{P}$ and $R \in \mathcal{E}$, and suppose that we have
\begin{equation}\label{eq:simpleform}H=\left(\begin{array}{cc}\lambda & \alpha \\0&0\end{array}\right),\qquad R=\left(\begin{array}{cc}\cos \theta & -\sin \theta\\\sin\theta&\cos \theta\end{array}\right)\end{equation}
for some $\alpha,\lambda,\theta \in \R$, where $\lambda \neq 0$. Then for all finite sequences $n_1,\ldots,n_k$ and $m_1,\ldots,m_k$ of positive integers
\[\tr\left(H^{n_k}R_\theta^{m_k}\cdots H^{n_1}R_\theta^{m_1}\right)=\lambda^{\sum_{i=1}^k n_i} \prod_{i=1}^k \left(\cos m_i\theta + \alpha\lambda^{-1}\sin m_i\theta\right)\]
and hence in particular
\[\rho\left(H^{n_k}R_\theta^{m_k}\cdots H^{n_1}R_\theta^{m_1}\right)=|\lambda|^{\sum_{i=1}^k n_i} \prod_{i=1}^k \left|\cos m_i\theta + \alpha\lambda^{-1}\sin m_i\theta\right|.\]
\end{lemma}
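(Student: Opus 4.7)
The plan is to exploit the rank-one structure of $H$ to reduce the product $H^{n_k}R_\theta^{m_k}\cdots H^{n_1}R_\theta^{m_1}$ to a scalar multiple of a product of the form $HR_\theta^{m_k}\cdots HR_\theta^{m_1}$, and then to observe that products of matrices whose second row is zero inherit that vanishing row, so the trace and spectral radius are both read off the $(1,1)$ entry.

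First, I would record the identity $H^n = \lambda^{n-1}H$ for every $n \geq 1$. This follows because $H$ has trace $\lambda$ and determinant $0$, so the Cayley--Hamilton relation gives $H^2 = \lambda H$ and the general case follows by induction. Applying this to each factor $H^{n_i}$ yields
\[H^{n_k}R_\theta^{m_k}\cdots H^{n_1}R_\theta^{m_1} = \lambda^{\sum_{i=1}^k(n_i-1)}\, H R_\theta^{m_k}\cdots H R_\theta^{m_1}.\]

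Next I would compute directly
\[H R_\theta^{m} = \left(\begin{array}{cc}\lambda\cos m\theta + \alpha \sin m\theta & -\lambda\sin m\theta + \alpha\cos m\theta \\ 0 & 0\end{array}\right),\]
which shows that every $HR_\theta^{m_i}$ has zero bottom row. Since matrices of the form $\bigl(\begin{smallmatrix}a & b\\0 & 0\end{smallmatrix}\bigr)$ are closed under multiplication, with product $\bigl(\begin{smallmatrix}a_2a_1 & a_2b_1\\0 & 0\end{smallmatrix}\bigr)$, a short induction on $k$ gives
\[H R_\theta^{m_k}\cdots H R_\theta^{m_1} = \left(\begin{array}{cc}\prod_{i=1}^k (\lambda\cos m_i\theta + \alpha\sin m_i\theta) & \ast \\ 0 & 0\end{array}\right)\]
for some scalar $\ast$ whose explicit value is irrelevant.

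The trace of this matrix is the $(1,1)$ entry, and its eigenvalues are $0$ and that same entry, so the spectral radius equals the absolute value of the $(1,1)$ entry. Combining with the scalar factor $\lambda^{\sum(n_i-1)}$ extracted at the start and pulling out one factor of $\lambda$ from each of the $k$ terms in the product converts $(\lambda\cos m_i\theta + \alpha\sin m_i\theta)$ into $\lambda(\cos m_i\theta + \alpha\lambda^{-1}\sin m_i\theta)$, giving exactly the claimed formulas for both $\tr$ and $\rho$. No step presents a real obstacle; the only thing worth being careful about is the identity $H^{n_i}=\lambda^{n_i-1}H$, which is what makes the entire computation collapse to a product of $k$ one-dimensional factors.
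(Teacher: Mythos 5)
Your proof is correct and follows essentially the same route as the paper's: both reduce $H^{n_i}$ to $\lambda^{n_i-1}H$ via Cayley--Hamilton, observe that the factors have zero bottom row so the product's $(1,1)$ entry is the product of the $(1,1)$ entries, and read off the trace and spectral radius from that entry. The only cosmetic difference is that you pull out the scalar $\lambda^{\sum(n_i-1)}$ before multiplying, whereas the paper keeps $H^{n_i}R^{m_i}$ together as a single block; the computation is otherwise identical.
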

\begin{proof}
By the Cayley-Hamilton theorem or direct computation we have $H^2=\lambda H$ and hence by a trivial induction $H^n=\lambda^{n-1}H$ for every $n \geq 1$. It follows that for each $n,m \geq 1$
\begin{align*}H^nR^m &= \left(\begin{array}{cc}\lambda^n&\lambda^{n-1}\alpha\\0&0\end{array}\right)\left(\begin{array}{cc}\cos m\theta &-\sin m\theta\\\sin m\theta & \cos m\theta\end{array}\right)\\
&= \left(\begin{array}{cc}\lambda^n(\cos m\theta+\alpha \lambda^{-1}\sin m\theta)&\lambda^n(\alpha\lambda^{-1}\cos m\theta - \sin m\theta)\\0&0\end{array}\right).\end{align*}
It follows directly that the lower-right and upper-left entries of the matrix
\[H^{n_k}R_\theta^{m_k}\cdots H^{n_1}R_\theta^{m_1}\]
are respectively zero and 
\[\lambda^{\sum_{i=1}^k n_i} \prod_{i=1}^k \left(\cos m_i\theta + \alpha\lambda^{-1}\sin m_i\theta\right)\]
and the result follows.
\end{proof}
\begin{lemma}\label{le:newformula}
Let $(H,R) \in \mathcal{P}\times\mathcal{E}$, and let $H^{n_k}R^{m_k}\cdots H^{n_1}R^{m_1}$ be a product of these two matrices in which every exponent $n_i$, $m_i$ is a non-negative integer, and not all of these integers are zero. Then there exists $n \geq 0$ such that
\begin{equation}\label{eq:mogisawesome}\rho\left(H^{n_k}R^{m_k}\cdots H^{n_1}R^{m_1}\right)^{\frac{1}{\sum_{i=1}^k (n_i+m_i)}}\geq \min\left\{\rho(R),\rho\left(HR^n\right)^{\frac{1}{n+1}}\right\}.\end{equation}
\end{lemma}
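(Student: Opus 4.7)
My plan is to reduce to the simplified form of Lemma \ref{le:similar}, apply the explicit formula of Lemma \ref{le:explicittrace}, and finish by a double application of the mediant inequality $\tfrac{A+B}{\alpha+\beta} \geq \min\{A/\alpha,\,B/\beta\}$ for $\alpha,\beta>0$.

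First, both sides of (\ref{eq:mogisawesome}) scale linearly under $(H,R)\mapsto \gamma^{-1}(H,R)$ and are invariant under simultaneous similarity, so by Lemma \ref{le:similar} I may assume at the outset that $H,R$ have the form (\ref{eq:simpleform}), in which case $\rho(R)=1$. Using the cyclic invariance of $\rho$ and the identity $H^n=\lambda^{n-1}H$ to combine adjacent like-factors, I reduce to one of three forms: the word is $H^N$ (then $\rho(H^N)^{1/N}=|\lambda|=\rho(HR^0)^{1/1}$, and the bound with $n=0$ holds trivially); the word is $R^N$ (then $\rho(R^N)^{1/N}=1=\rho(R)$); or the word has the form $H^{n_k}R^{m_k}\cdots H^{n_1}R^{m_1}$ with every $n_i,m_i\geq 1$.

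In the main case, set $N:=\sum_{i=1}^k(n_i+m_i)$ and $c_i:=|\cos m_i\theta+\alpha\lambda^{-1}\sin m_i\theta|$. Lemma \ref{le:explicittrace} gives
\[
\log \rho\bigl(H^{n_k}R^{m_k}\cdots H^{n_1}R^{m_1}\bigr)^{1/N}\;=\;\frac{1}{N}\sum_{i=1}^k\bigl(n_i\log|\lambda|+\log c_i\bigr),
\]
which is the weighted average with weights $n_i+m_i$ of the per-block rates $r_i:=\frac{n_i\log|\lambda|+\log c_i}{n_i+m_i}$, hence $\log\rho(\cdots)^{1/N}\geq \min_i r_i$. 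Next, the decomposition
\[
n_i\log|\lambda|+\log c_i \;=\; (n_i-1)\log|\lambda|\;+\;(\log|\lambda|+\log c_i),\qquad n_i+m_i = (n_i-1)+(m_i+1),
\]
identifies $r_i$, via the mediant inequality applied to the nonnegative weights $n_i-1$ and $m_i+1$, as a weighted average of $\log|\lambda|$ and $s_i:=\tfrac{1}{m_i+1}\log(|\lambda|c_i)=\log\rho(HR^{m_i})^{1/(m_i+1)}$, so $r_i\geq \min\{\log|\lambda|,\,s_i\}$. Noting that $\log|\lambda|=\log\rho(HR^0)^{1/(0+1)}$, it follows that for some $n\in\{0,m_{i^*}\}$ one has $\rho(\cdots)^{1/N}\geq \rho(HR^n)^{1/(n+1)}\geq \min\{\rho(R),\rho(HR^n)^{1/(n+1)}\}$, as required.

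I do not foresee a genuine obstacle here: the substantive content is only the recognition that the normalised log-spectral radius is a nested weighted average whose extreme constituents are precisely $\log\rho(HR^0)^{1/1}=\log|\lambda|$ and the candidate quantities $\log\rho(HR^{m_i})^{1/(m_i+1)}$, after which the mediant inequality does the work. The only small care required is the preliminary case-analysis for $n_i=0$ or $m_i=0$, which is dispatched by combining consecutive identical factors via the cyclic invariance of $\rho$.
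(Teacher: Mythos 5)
Your proposal is correct and follows essentially the same route as the paper: the identical reduction to the form \eqref{eq:simpleform} via Lemma~\ref{le:similar}, the identical disposal of vacuous exponents via cyclic invariance of $\rho$, and the identical application of Lemma~\ref{le:explicittrace}. The only difference is presentational: where the paper bounds $\rho(\cdots)$ below by $|\lambda|^{\sum(n_i-1)}r^{\sum(1+m_i)}$ with $r=\min_i\rho(HR^{m_i})^{1/(1+m_i)}$ and then splits into the cases $r>|\lambda|$ and $r\leq|\lambda|$, you repackage precisely the same estimate as a nested mediant inequality on the normalised log-spectral-radius, arriving at the same bound $\rho(\cdots)^{1/N}\geq\min\{|\lambda|,r\}$.
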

\begin{proof}
It is clear that the quantities appearing in the statement of the lemma are homogenous in $H$ and $R$ and are unaffected by a change-of-basis transformation, so by Lemma \ref{le:similar} it is sufficient to consider the case where $(H,R)$ has the form \eqref{eq:simpleform}. 

Let us assume that the product $H^{n_k}R^{m_k}\cdots H^{n_1}R^{m_1}$ has been presented in such a manner that the integer $k \geq 1$ is minimal. If all of the $m_i$ are zero then we have
\[\rho\left(H^{n_k}R^{m_k}\cdots H^{n_1}R^{m_1}\right)^{\frac{1}{\sum_{i=1}^k (n_i+m_i)}}=\rho\left(H^{\sum_{i=1}^kn_i}\right)^{\frac{1}{\sum_{i=1}^kn_i}}=\rho(H)\]
so that the conclusion of the lemma holds with $n:=0$. If all of the indices $n_i$ are zero then the left-hand side of \eqref{eq:mogisawesome} similarly reduces to $\rho(R)$ and the conclusion holds trivially. For the remainder of the proof we may assume without loss of generality that $\sum_{i=1}^km_i$ and $\sum_{i=1}^k n_i$ are nonzero.

If $n_i$ is equal to zero for some integer $i<k$ then we may write
\[\rho\left(H^{n_k}R^{m_k}\cdots H^{n_1}R^{m_1}\right)=\rho\left(H^{n_k}R^{m_k}\cdots H^{n_{i+1}}R^{m_{i+1}+m_i}H^{n_{i-1}} \cdots H^{n_1}R^{m_1}\right)\]
which requires only $k-1$ indices, contradicting the minimalitiy of $k$. Similarly if $n_k=0$ then
\[\rho\left(H^{n_k}R^{m_k}\cdots H^{n_1}R^{m_1}\right)=\rho\left(H^{n_{k-1}}R^{m_{k-1}}\cdots H^{n_1}R^{m_1+m_k}\right)\]
by the standard identity $\rho(AB)=\rho(BA)$, and the minimality of $k$ is again contradicted. Analogous considerations show that each $m_i$ must also be nonzero. We may therefore apply Lemma \ref{le:explicittrace} to obtain
\[\rho\left(H^{n_k}R^{m_k}\cdots H^{n_1}R^{m_1}\right)=|\lambda|^{\sum_{i=1}^k (n_i-1)} \prod_{i=1}^k \left|\lambda\cos m_i\theta + \alpha\sin m_i\theta\right|.\]
Let us define
\[r:=\min_{1 \leq i \leq k}\left|\lambda\cos m_i\theta + \alpha\sin m_i\theta\right|^{\frac{1}{1+m_i}}.\]
Obviously, there exists an integer $m \geq 1$ such that
\[r=\left|\lambda\cos m\theta + \alpha \sin m\theta\right|^{\frac{1}{1+m}}=\rho\left(HR^m\right)^{\frac{1}{1+m}}.\]
Now, it is clear that we may write
\begin{align*}\rho\left(H^{n_k}R^{m_k}\cdots H^{n_1}R^{m_1}\right)&=|\lambda|^{\sum_{i=1}^k (n_i-1)} \prod_{i=1}^k \left|\lambda\cos m_i\theta + \alpha\sin m_i\theta\right|\\
&\geq |\lambda|^{\sum_{i=1}^k(n_i-1)} r^{\sum_{i=1}^k (1+m_i)}.\end{align*}
If $r>|\lambda|=\rho(H)$ then we have 
\[\rho\left(H^{n_k}R^{m_k}\cdots H^{n_1}R^{m_1}\right)> |\lambda|^{\sum_{i=1}^k(n_i+m_i)}\geq\rho(H)^{\sum_{i=1}^k(n_i+m_i)}\]
and the conclusion of the lemma is is satisfied with $n:=0$. Otherwise we have
\[\rho\left(H^{n_k}R^{m_k}\cdots H^{n_1}R^{m_1}\right)\geq r^{\sum_{i=1}^k (n_i+m_i)}\]
and therefore
\[\rho\left(H^{n_k}R^{m_k}\cdots H^{n_1}R^{m_1}\right)^{\frac{1}{\sum_{i=1}^k (n_i+m_i)}}\geq r=\rho\left(HR^m\right)^{\frac{1}{1+m}}\]
as required. The proof is complete. 
\end{proof}
We immediately deduce the following:
\begin{lemma}\label{le:formula}
Let $(H,R) \in \mathcal{P}\times\mathcal{E}$. Then
\[\underline{\varrho}(H,R)=\inf_{n \geq 0}\rho\left(HR^n\right)^{\frac{1}{n+1}}.\]
\end{lemma}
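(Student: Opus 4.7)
The plan is to deduce the identity from Lemma~\ref{le:newformula} by proving the two inequalities separately. Since both sides of the claimed equation are positively homogeneous and invariant under simultaneous conjugation, Lemma~\ref{le:similar} lets me assume throughout that $(H,R)$ is in the normal form \eqref{eq:simpleform}; in particular $R$ is a rotation and $\rho(R)=1$.

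For the upper bound $\underline{\varrho}(H,R)\leq\inf_{n\geq 0}\rho(HR^n)^{1/(n+1)}$ I would invoke the standard fact that $\underline{\varrho}(\mathsf{A})\leq\rho(P)^{1/m}$ for any length-$m$ product $P$ of elements of $\mathsf{A}$: testing the definition of $\underline{\varrho}$ on the periodic sequence with period $P$ and applying Gelfand's formula to $P^k$ yields this at once. Specialising to $P=HR^n$ (of length $n+1$) and infimising over $n\geq 0$ gives the required bound.

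For the lower bound, set $\rho^{\ast}:=\inf_{n\geq 0}\rho(HR^n)^{1/(n+1)}$. First, I would establish the \emph{a priori} estimate $\rho^{\ast}\leq 1=\rho(R)$: Lemma~\ref{le:explicittrace} gives $\rho(HR^n)=|\lambda\cos n\theta+\alpha\sin n\theta|\leq(\lambda^2+\alpha^2)^{1/2}$, independently of $n$, so $\rho(HR^n)^{1/(n+1)}\to 1$ and in particular $\rho^{\ast}\leq 1$. Next, I would observe that any product $P=A_{i_1}\cdots A_{i_N}$ with $A_{i_j}\in\{H,R\}$ can be presented, by grouping consecutive identical letters, in the form $H^{n_k}R^{m_k}\cdots H^{n_1}R^{m_1}$ required by Lemma~\ref{le:newformula}, with the leftmost and rightmost exponents allowed to vanish if $P$ begins with $R$ or ends with $H$. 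That lemma then supplies some $n\geq 0$ with
\[
\rho(P)^{1/N}\geq\min\bigl\{\rho(R),\rho(HR^n)^{1/(n+1)}\bigr\}\geq\min\{1,\rho^{\ast}\}=\rho^{\ast},
\]
where the last equality uses the \emph{a priori} estimate. Since $\|P\|\geq\rho(P)$, the same lower bound holds for $\|P\|^{1/N}$, and taking the infimum over length-$N$ products followed by the limit $N\to\infty$ in the definition of $\underline{\varrho}$ yields $\underline{\varrho}(H,R)\geq\rho^{\ast}$.

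The one step I expect to require real care is the \emph{a priori} inequality $\rho^{\ast}\leq\rho(R)$: without it, the minimum in Lemma~\ref{le:newformula} cannot be collapsed to the single desired term. Fortunately, this bound is essentially free from the rank-one hypothesis on $H$, which forces $HR^n$ to be a rank-one matrix with uniformly bounded trace.
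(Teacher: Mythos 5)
Your proof is correct and follows essentially the same route as the paper: both halves hinge on the \emph{a priori} inequality $\inf_{n\geq 0}\rho(HR^n)^{1/(n+1)}\leq\rho(R)$, which collapses the minimum in Lemma~\ref{le:newformula}, combined with the easy upper bound from finite products. The only differences are cosmetic. For the upper bound the paper cites the general identity $\underline{\varrho}(\mathsf{A})=\inf_n\min\rho(A_{i_1}\cdots A_{i_n})^{1/n}$, whereas you re-derive the one needed inequality from periodic sequences plus Gelfand; for the \emph{a priori} bound the paper argues directly via $\rho(HR^n)\leq\|H\|\,\|R^n\|$ and Gelfand (which avoids the reduction to normal form), whereas you pass to the normal form and read off the bound from Lemma~\ref{le:explicittrace}. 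One small imprecision worth correcting: your claim that $\rho(HR^n)^{1/(n+1)}\to 1$ is not true in general (for Liouville-type $\theta$, or when $\theta$ is a rational multiple of $\pi$ making the trace vanish, the $\liminf$ can be $0$); what the boundedness of $\rho(HR^n)$ actually gives is $\limsup_{n\to\infty}\rho(HR^n)^{1/(n+1)}\leq 1$, which is all you need to conclude $\rho^{\ast}\leq 1$.
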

\begin{proof}
We have
\[\rho(HR^n) \leq \|HR^n\|\leq \|H\|\cdot \|R^n\|\]
for every $n \geq 0$, and therefore
\[\inf_{n \geq 0}\rho\left(HR^n\right)^{\frac{1}{1+n}} \leq \limsup_{n \to \infty} \rho\left(HR^n\right)^{\frac{1}{1+n}} \leq \limsup_{n \to \infty} \left(\|H\|\cdot \|R^n\|\right)^{\frac{1}{1+n}}=\rho(R)\]
using Gelfand's formula. It follows by this inequality together with Lemma \ref{le:newformula} that
\[\underline{\varrho}(H,R)\geq \inf_{n \geq 0}\rho\left(HR^n\right)^{\frac{1}{n+1}},\]
and the general identity
\[\underline{\varrho}(A_1,\ldots,A_k)=\inf_{n \geq 1} \min_{1\leq i_1,\ldots,i_n \leq k} \rho\left(A_{i_1}\cdots A_{i_n}\right)^{\frac{1}{n}}\]
(see e.g. \cite[\S1.2.1]{Ju09}) yields the other direction of inequality.
\end{proof}
Finally, the following lemma will be used on two occasions in which it is helpful to estimate small quantities of the form $|\lambda\cos \theta+\alpha\sin \theta|$.
\begin{lemma}\label{le:ratio}
Let $\alpha, \lambda \in \mathbb{R}$ where $\lambda$ is nonzero. If $\alpha=0$ then define $\theta_0:=\pi/2$; otherwise, define $\theta_0:=\tan^{-1}(-\lambda/\alpha)\in\left(-\frac{1}{2},\frac{1}{2}\right)$, so that in either event we have $\lambda \cos\theta_0+\alpha \sin\theta_0=0$. Then there exists a constant $C_0>1$ such that for every $\theta \in \mathbb{R}$
\[\frac{1}{C_0}\mathrm{dist}(\theta, \theta_0+\pi\mathbb{Z}) \leq \left|\lambda \cos \theta +\alpha\sin \theta\right|\leq C_0\mathrm{dist}(\theta,\theta_0+\pi\mathbb{Z}),\]
where $\mathrm{dist}(\theta,a+b\mathbb{Z})$ denotes the distance from the real number $\theta$ to the nearest  real number of the form $a+b\ell$ with $\ell \in \mathbb{Z}$.
\end{lemma}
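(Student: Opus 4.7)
The plan is to reduce the expression $\lambda\cos\theta+\alpha\sin\theta$ to a single sinusoid with amplitude $r:=\sqrt{\lambda^2+\alpha^2}$ (which is strictly positive since $\lambda\neq 0$) and phase determined by $\theta_0$, and then invoke the standard estimate comparing $|\sin|$ with distance to $\pi\mathbb{Z}$.

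First I would write $\lambda=r\cos\psi$, $\alpha=r\sin\psi$ for a unique $\psi\in(-\pi,\pi]$, so that the angle-addition formula gives
\[\lambda\cos\theta+\alpha\sin\theta=r\cos(\theta-\psi).\]
The zeros of this expression are precisely the set $\psi+\tfrac{\pi}{2}+\pi\mathbb{Z}$, and one checks directly from the definition of $\theta_0$ that $\theta_0\in\psi+\tfrac{\pi}{2}+\pi\mathbb{Z}$: if $\alpha=0$ then $\psi\in\{0,\pi\}$ and $\theta_0=\pi/2$, while if $\alpha\neq 0$ the relation $\tan\theta_0=-\lambda/\alpha=-\cot\psi$ places $\theta_0$ in the same $\pi$-coset as $\psi+\pi/2$. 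Consequently we may rewrite
\[\lambda\cos\theta+\alpha\sin\theta=\pm r\sin(\theta-\theta_0).\]

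Next I would appeal to the elementary inequality $\tfrac{2}{\pi}|x|\le|\sin x|\le|x|$ valid on $[-\pi/2,\pi/2]$, and extend it by $\pi$-periodicity of $|\sin|$: for every $\varphi\in\mathbb{R}$,
\[\frac{2}{\pi}\mathrm{dist}(\varphi,\pi\mathbb{Z})\le|\sin\varphi|\le\mathrm{dist}(\varphi,\pi\mathbb{Z}).\]
Applying this to $\varphi=\theta-\theta_0$ and observing that $\mathrm{dist}(\theta-\theta_0,\pi\mathbb{Z})=\mathrm{dist}(\theta,\theta_0+\pi\mathbb{Z})$, we obtain
\[\frac{2r}{\pi}\mathrm{dist}(\theta,\theta_0+\pi\mathbb{Z})\le\bigl|\lambda\cos\theta+\alpha\sin\theta\bigr|\le r\,\mathrm{dist}(\theta,\theta_0+\pi\mathbb{Z}).\]
Taking $C_0:=\max\{r,\pi/(2r),2\}$ yields a constant strictly greater than $1$ for which both inequalities of the lemma hold simultaneously.

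There is no real obstacle here: the content of the lemma is the standard fact that a smooth function on the circle vanishing to first order at a point is comparable to the distance from that point, and the only issue is bookkeeping to express the zero set in terms of $\theta_0$ rather than $\psi$. I would therefore expect the actual write-up to consist essentially of the amplitude-phase rewriting together with the citation of the elementary estimate for $|\sin|$.
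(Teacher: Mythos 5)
Your proof is correct, but it takes a genuinely different route from the paper. You reduce $\lambda\cos\theta+\alpha\sin\theta$ explicitly to $\pm r\sin(\theta-\theta_0)$ with $r=\sqrt{\lambda^2+\alpha^2}$ via the amplitude-phase form, then invoke Jordan's inequality $\tfrac{2}{\pi}|x|\le|\sin x|\le|x|$ on $[-\pi/2,\pi/2]$ together with $\pi$-periodicity. This yields the explicit two-sided bound with constant $C_0=\max\{r,\pi/(2r),2\}$. The paper instead argues softly: it sets $f(\theta)=|\lambda\cos\theta+\alpha\sin\theta|$ and $g(\theta)=\mathrm{dist}(\theta,\theta_0+\pi\mathbb{Z})$, notes both are $\pi$-periodic, and shows that the continuous, nowhere-vanishing function $f/g$ on $[-\pi/2,\pi/2]\setminus\{\theta_0\}$ extends continuously to $\theta_0$ with nonzero limit $|\alpha\cos\theta_0-\lambda\sin\theta_0|$ (this is the absolute value of the derivative at the zero, which in your notation equals exactly $r$), and then concludes boundedness from compactness. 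Your approach buys an explicit constant and avoids the limit/compactness argument; the paper's approach is slightly more robust in that it applies verbatim to any $C^1$ function on the circle with a single simple zero per period, without needing the trigonometric identity. Both are clean proofs of the same fact, and the key computation $\lim_{\theta\to\theta_0}f(\theta)/g(\theta)=\sqrt{\lambda^2+\alpha^2}\neq 0$ in the paper is exactly the slope $r$ that your phase rewriting makes manifest.
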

\begin{proof}
Define $f,g \colon \mathbb{R} \to \mathbb{R}$ by $f(\theta):=|\lambda \cos \theta + \alpha \sin\theta|$ and $g(\theta):=\mathrm{dist}(\theta,\theta_0+\pi\mathbb{Z})$. We must show that the ratio $f(\theta)/g(\theta)$ is uniformly bounded away from zero and infinity with respect to $\theta \in \mathbb{R} \setminus (\theta_0+\pi\mathbb{Z})$. Since both $f$ and $g$ are periodic with period $\pi$, it is sufficient to show that $f(\theta)/g(\theta)$ is bounded away from zero and infinity for $\theta$ belonging to the interval $\left[-\frac{\pi}{2},\frac{\pi}{2}\right]\setminus \{\theta_0\}$.  

The function $f/g \colon \left[-\frac{\pi}{2},\frac{\pi}{2}\right]\setminus \{\theta_0\} \to \mathbb{R}$ is clearly well-defined, continuous and nowhere zero, so to show that this function is bounded away from zero and infinity it is sufficient to show that $\lim_{\theta \to \theta_0} f(\theta)/g(\theta)$ exists and is nonzero. We have
\begin{align*}
\lim_{\theta \to \theta_0} \frac{f(\theta)}{g(\theta)}&=\lim_{h \to 0} \frac{f(\theta_0+h)}{g(\theta_0+h)}\\
&=\lim_{h \to 0} \frac{|\lambda \cos(\theta_0+h)+\alpha\sin(\theta_0+h)|}{|h|}\\
&=\lim_{h \to 0} \left|\frac{\lambda \cos(\theta_0+h)+\alpha \sin(\theta_0+h)-\lambda \cos(\theta_0)-\alpha \sin(\theta_0)}{h}\right|\\
&=\left|\alpha \cos \theta_0-\lambda \sin\theta_0\right|\end{align*}
so the result follows if we can show that $\alpha\cos\theta_0-\lambda\sin \theta_0$ is nonzero. If $\alpha\cos\theta_0-\lambda\sin\theta_0=0$ and $\cos\theta_0\neq 0$ then $\tan \theta_0=\alpha/\lambda$ and therefore $-\lambda/\alpha=\alpha/\lambda$, implying the equation $\alpha^2+\lambda^2=0$ which is impossible since both numbers are real and $\lambda\neq0$. If instead $\alpha\cos\theta_0-\lambda\sin\theta_0=0$  and also $\cos\theta_0=0$ then we have $\lambda=0$, a contradiction.
\end{proof}

\section{On the existence of zero products}

In this section we investigate those pairs $(H,R) \in\mathcal{P}\times\mathcal{E}$ such that there exists a finite product of the two matrices $H$ and $R$ which is equal to the zero matrix. We begin with the following result, which shows that this situation is, in a certain topological sense, uncommon.
\begin{lemma}\label{le:notmanyzeros}
For every two finite sequences of non-negative integers $n_1,\ldots,n_k$ and $m_1,\ldots,m_{k+1}$, with all integers except possibly $m_1$ and $m_{k+1}$ being nonzero, the set
\[Z:=\left\{(A,B) \in \mathcal{P}\times\mathcal{E} \colon B^{m_{k+1}}A^{n_k}B^{m_k} A^{n_{k-1}}\cdots A^{n_1}B^{m_1}=0\right\}\]
is closed and has empty interior.
\end{lemma}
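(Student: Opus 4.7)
The plan is to prove closedness directly from continuity of polynomial maps, and then to establish empty interior by reducing to the normal form of Lemma \ref{le:similar} and computing the product explicitly via the rank-one factorisation $H=e_1v^T$. For closedness, the product $B^{m_{k+1}}A^{n_k}\cdots A^{n_1}B^{m_1}$ has entries which are polynomials in the entries of $A$ and $B$, so it depends continuously on $(A,B)\in M_2(\R)^2$; hence $Z$ is the intersection of $\mathcal{P}\times\mathcal{E}$ with a closed algebraic subvariety, and is closed in the subspace topology.

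For empty interior, fix $(H_0,R_0)\in Z$ together with a neighbourhood $U$ in $\mathcal{P}\times\mathcal{E}$. By Lemma \ref{le:similar} there exist $\gamma>0$ and an invertible $C$ such that the transformation $X\mapsto \gamma^{-1}CXC^{-1}$ brings $(H_0,R_0)$ to the normal form \eqref{eq:simpleform} with parameters $(\lambda,\alpha,\theta)$, $\lambda\neq 0$. Under this transformation the product in question is multiplied by a nonzero power of $\gamma$ and then conjugated by $C$, so vanishing is preserved. For $\gamma$ and $C$ fixed, the inverse map sending reduced parameters $(\lambda',\alpha',\theta')$ back into $M_2(\R)^2$ is smooth, so small perturbations of $(\lambda,\alpha,\theta)$ produce pairs lying inside $U$. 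It therefore suffices to show that the set of reduced parameters for which the product vanishes is nowhere dense.

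In the reduced coordinates write $H=e_1v^T$ with $e_1=(1,0)^T$ and $v=(\lambda,\alpha)^T$, so that $H^n=\lambda^{n-1}H$ for every $n\geq 1$. Repeated application of the outer-product identity $(e_1v^T)R^m(e_1v^T)=(v^TR^me_1)\,e_1v^T$ yields
\[
R^{m_{k+1}}H^{n_k}R^{m_k}\cdots H^{n_1}R^{m_1}=\lambda^{\sum_{i=1}^k(n_i-1)}\!\left(\prod_{i=2}^{k}c_{m_i}\right)\!\bigl(R^{m_{k+1}}e_1\bigr)\bigl(v^TR^{m_1}\bigr),
\]
where $c_m:=\lambda\cos m\theta+\alpha\sin m\theta$ (the same quantity treated in Lemma \ref{le:ratio}). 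The trailing outer product is always nonzero: $R^{m_{k+1}}e_1$ is a unit vector, and $v^TR^{m_1}$ has Euclidean norm $\sqrt{\lambda^2+\alpha^2}>0$ since $\lambda\neq 0$. The cases $k\leq 1$ therefore force $Z=\emptyset$; for $k\geq 2$ the product vanishes if and only if $c_{m_i}=0$ for some $i\in\{2,\ldots,k\}$.

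Each equation $c_{m_i}=0$ cuts out the zero set of a nontrivial real-analytic function on $(\lambda,\alpha,\theta)$-space (observe that at $\alpha=0$, $\lambda=1$ this function reduces to $\cos m_i\theta$, which is nonzero for generic $\theta$), hence is closed and nowhere dense. A finite union of closed nowhere dense sets is nowhere dense, so arbitrarily small perturbations of $(\lambda,\alpha,\theta)$ push us off every such locus; transporting these perturbations back through the smooth reduction produces a point of $U\setminus Z$. The only genuine subtlety is verifying that the pull-back indeed preserves smallness, but this is routine once the explicit factorisation is in hand, since conjugation and scalar multiplication are smooth and invertible.
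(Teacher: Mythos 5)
Your proof is correct, and it reaches the conclusion by a genuinely different route than the paper.

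The paper's proof of the empty-interior claim is shorter and avoids any reduction or explicit computation: after normalising so that $m_{k+1}=0$ and $m_1>0$ (using invertibility of $B$), it fixes $A_0$ and regards $\Phi_{A_0}(B):=A_0^{n_k}B^{m_k}\cdots A_0^{n_1}B^{m_1}$ as a matrix whose entries are polynomials in the four entries of $B$. If $(A_0,B_0)$ were an interior point of $Z$ then these polynomials would vanish on a nonempty open set, hence identically; but $\Phi_{A_0}(A_0)=A_0^{\sum(n_i+m_i)}\neq 0$ because $A_0\in\mathcal{P}$ is not nilpotent. This is a clean application of the polynomial identity principle that sidesteps the normal form entirely.

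Your argument instead passes to the reduced coordinates of Lemma \ref{le:similar}, uses the rank-one factorisation $H=e_1v^T$ to collapse the whole word to a scalar prefactor $\lambda^{\sum(n_i-1)}\prod_{i=2}^k c_{m_i}$ times the nonzero outer product $(R^{m_{k+1}}e_1)(v^TR^{m_1})$, and then shows each locus $\{c_{m_i}=0\}$ is the zero set of a nontrivial real-analytic function of $(\lambda,\alpha,\theta)$, hence nowhere dense. This is more work than the paper's version, but the explicit factorisation is more informative: it pins down exactly which products vanish (namely those with some interior $c_{m_i}=0$), which is the vanishing criterion implicitly used in Lemma \ref{le:dense-zeros}; and in fact taking traces in your formula recovers Lemma \ref{le:explicittrace}. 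The only places worth tightening are the two soft spots you already flag: (i) that the fixed conjugation-and-rescaling and its inverse are continuous, so a small enough $(\lambda,\alpha,\theta)$-perturbation stays inside $U$; and (ii) that perturbing $(\lambda,\alpha,\theta)$ keeps $\lambda\neq 0$ and $\sin\theta\neq 0$, so one remains in $\mathcal{P}\times\mathcal{E}$. Both are immediate, but they are part of the argument rather than purely routine, so a finished write-up should state them.
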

\begin{proof}
Since every $B\in\mathcal{E}$ is invertible we note that the equation
\[B^{m_{k+1}}A^{n_k}B^{m_k} A^{n_{k-1}}\cdots A^{n_1}B^{m_1}=0\]
for some $(A,B) \in \mathcal{P} \times \mathcal{E}$ is equivalent to the equation
\[A^{n_k}B^{m_k} A^{n_{k-1}}\cdots A^{n_1}B^{m_1+\ell}=0\]
for the same $A$ and $B$ and every non-negative integer $\ell$, so we may assume without loss of generality that $m_{k+1}=0$ and $m_1>0$. By continuity the set $Z$ is obviously closed.  For each $A \in \mathcal{P}$ let us define a function $\Phi_A \colon M_2(\R)\to M_2(\R)$ by
\[\Phi_{A}(B):=A^{n_k} B^{m_k} \cdots A^{n_1}B^{m_1}.\]
 Let us suppose for a contradiction that the set $Z$ contains a nonempty open set, and let $(A_0,B_0)$ be an interior point of $Z$. 
Each of the four entries of the matrix $\Phi_{A_0}(B)$ is a polynomial function of the entries of $B$, and by hypothesis is identically zero in a neighbourhood of $B_0$. It follows that each of these entries is the zero polynomial, and therefore $\Phi_{A_0}$ is identically zero throughout $M_2(\R)$; however, we obviously have
\[\Phi(A_0)=A_0^{\sum_{i=1}^k n_i+m_i} \neq 0\]
since $A_0$ is not nilpotent, and we obtain a contradiction. We conclude that $Z$ has empty interior as claimed.
\end{proof}

In the converse direction, the existence of a product of the matrices $(H,R)$ which is zero is in fact a dense property in $\mathcal{P}\times\mathcal{E}$:
\begin{lemma}\label{le:dense-zeros}
The set of all $(H,R) \in \mathcal{P}\times\mathcal{E}$ such that $\underline{\varrho}(H,R)=0$ and $(H,R)$ satisfies the lower finiteness property is a dense set.
\end{lemma}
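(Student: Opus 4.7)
The plan is to reduce to the normal form \eqref{eq:simpleform} via Lemma \ref{le:similar} and then arrange, by a small perturbation of the rotation angle, that $\rho(HR^n)=0$ for some integer $n \geq 1$. In view of Lemma \ref{le:formula} this forces $\underline{\varrho}(H,R)=0$, and the very product $HR^n$ witnesses the lower finiteness property. Concretely, given any $(H_0,R_0) \in \mathcal{P}\times\mathcal{E}$ and any $\varepsilon>0$, I would apply Lemma \ref{le:similar} to produce $\gamma>0$, an invertible $S$, and parameters $\lambda \neq 0$, $\alpha \in \R$, $\theta_0 \in \R\setminus\pi\Z$ such that $\gamma^{-1}(SH_0S^{-1},SR_0S^{-1})$ is the pair \eqref{eq:simpleform}. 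Since the inverse map $(X,Y)\mapsto (\gamma S^{-1}XS,\gamma S^{-1}YS)$ is a homeomorphism of $M_2(\R)^2$ carrying the standard-form pairs in $\mathcal{P}\times\mathcal{E}$ back into $\mathcal{P}\times\mathcal{E}$, it is enough to construct, \emph{in the standard form}, an angle $\theta \notin \pi\Z$ with $|\theta-\theta_0|$ arbitrarily small together with an integer $n \geq 1$ satisfying $\lambda \cos n\theta + \alpha \sin n\theta = 0$.

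To produce such a pair $(\theta,n)$, I would use Lemma \ref{le:ratio} to obtain a zero $\theta^*$ of $\phi \mapsto \lambda\cos\phi+\alpha\sin\phi$; the desired equation then reads $n\theta \in \theta^*+\pi\Z$. For each $n \geq 1$ the solution set $\{(\theta^*+k\pi)/n : k \in \Z\}$ is an arithmetic progression of step $\pi/n$, so for every sufficiently large $n$ some element of this progression falls within $\varepsilon$ of $\theta_0$; the finitely many such candidates that happen to lie in $\pi\Z$ can be discarded, ensuring $R_\theta \in \mathcal{E}$. For this choice, Lemma \ref{le:explicittrace} applied with $k=1$, $n_1=1$, $m_1=n$ gives $\rho(HR_\theta^{n})=|\lambda\cos n\theta+\alpha\sin n\theta|=0$, completing the construction in the standard form.

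There is no substantive obstacle in this argument; its entire content is packaged in the preparatory lemmas of Section 2. The one point warranting a line of care is verifying that the perturbation, after being pulled back through the similarity and rescaling, remains close to $(H_0,R_0)$ inside $\mathcal{P}\times\mathcal{E}$, which follows at once from continuity of the conjugation-and-rescaling map together with the openness of $\mathcal{P}\times\mathcal{E}$ in $M_2(\R)^2$.
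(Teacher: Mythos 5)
Your proposal is correct and follows essentially the same path as the paper's own proof: reduce to the normal form of Lemma \ref{le:similar}, use Lemma \ref{le:explicittrace} to translate the problem into finding $\theta$ near $\theta_0$ with $\lambda\cos n\theta + \alpha\sin n\theta = 0$, and observe that the zeros of $\phi\mapsto\lambda\cos n\phi+\alpha\sin n\phi$ form a $\pi/n$-periodic set and hence are dense once $n$ is large. The only cosmetic differences are that the paper exhibits the genuine zero product $HR^mH=0$ via Cayley--Hamilton rather than arguing directly from $\rho(HR^n)=0$ and Lemma \ref{le:formula}, and that it locates the zero of $\lambda\cos\phi+\alpha\sin\phi$ by a one-line case split on $\alpha$ rather than by citing Lemma \ref{le:ratio}.
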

\begin{proof}
Given $(H,R)\in\mathcal{P}\times\mathcal{E}$ we must show that there exist matrices $\tilde{H},\tilde{R}$ arbitrarily close to $(H,R)$ such that $\underline{\varrho}(\tilde{H},\tilde{R})=0$ and such that $\tilde{H},\tilde{R}$ satisfies the lower finiteness property. Clearly this property is unaffected by applying a change of basis or multiplying both matrices $H$ and $R$ by the same scalar, so using Lemma \ref{le:similar}  we may assume without loss of generality that the pair $(H,R)$ has the form
\[H=\left(\begin{array}{cc}\lambda & \alpha \\0&0\end{array}\right),\qquad R=\left(\begin{array}{cc}\cos \theta & -\sin \theta\\\sin\theta&\cos \theta\end{array}\right)\]
for some real numbers $\alpha,\lambda,\theta \in \R$, where $\lambda \neq 0$. If $(H,R)$ is as above for some $\alpha,\lambda,\theta \in \R$ then using Lemma \ref{le:explicittrace} we have
\[\tr R^mH=\lambda\cos m\theta +\alpha \sin m\theta\]
and if this trace is zero then by the Cayley-Hamilton theorem we have $(R^mH)^2=0$, which implies $HR^m H=0$ since $R$ is invertible. To prove the lemma it is thus sufficient to prove the following: for every fixed $\alpha,\lambda\in \R$ the set
\[\bigcup_{m=1}^\infty \{\theta \in \R \colon \lambda \cos m\theta+\alpha \sin m\theta =0\}\]
is dense in $\R$. Given $\lambda$ and $\alpha$, it is clearly sufficient to show that for every $m \geq 1$, every interval in $\R$ with length greater than $\pi/m$ contains a number $\theta$ such that $\lambda\cos m\theta + \alpha \sin m\theta=0$. But this result is straightforward to prove: if we define $\xi_m \colon \R \to \R$ by $\xi_m(\phi):=|\lambda\cos m\phi +\alpha \sin m\phi|$ then this function is periodic with period $\pi/m$, and it has at least one zero, since either $\xi_m(\pi/2m)=0$ if $\alpha=0$, or $\xi_m(m^{-1}\tan^{-1}(-\lambda/\alpha))=0$ if $\alpha \neq 0$. In particular every interval of length at least $\pi/m$ contains a zero of $\xi_m$ as required.
\end{proof}

Finally, we note the following implication of the existence of a zero product:

\begin{lemma}\label{le:zero-cont}
If $H,R \in M_2(\mathbb{R})$ and $\underline{\varrho}(H,R)=0$ then $\underline{\varrho} \colon M_2(\mathbb{R})^2 \to \mathbb{R}$ is continuous at $(H,R)$. 
\end{lemma}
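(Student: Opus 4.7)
The plan is to exploit upper semicontinuity of the lower spectral radius. Recall the identity already invoked at the end of the proof of Lemma \ref{le:formula}:
\[\underline{\varrho}(A_1,\ldots,A_k)=\inf_{n \geq 1} \min_{1\leq i_1,\ldots,i_n \leq k} \rho\left(A_{i_1}\cdots A_{i_n}\right)^{\frac{1}{n}}.\]
For each fixed $n$ and each fixed tuple of indices, the map $(H,R)\mapsto \rho(A_{i_1}\cdots A_{i_n})^{1/n}$ is a continuous function of the matrix entries (the spectral radius is continuous because it is a maximum of moduli of roots of the characteristic polynomial, whose coefficients depend continuously on the entries). Taking the infimum over the countable collection of such continuous functions shows that $\underline{\varrho}\colon M_2(\mathbb{R})^2\to\mathbb{R}$ is upper semicontinuous everywhere.

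On the other hand, $\underline{\varrho}(H,R)\geq 0$ holds trivially for every pair of matrices, since every quantity $\|A_n\cdots A_1\|^{1/n}$ is non-negative. If now $\underline{\varrho}(H,R)=0$, upper semicontinuity gives
\[\limsup_{(H',R')\to(H,R)} \underline{\varrho}(H',R')\leq\underline{\varrho}(H,R)=0,\]
while the lower bound $\underline{\varrho}\geq 0$ forces $\liminf_{(H',R')\to(H,R)} \underline{\varrho}(H',R')\geq 0$. Combining the two, the limit exists and equals $\underline{\varrho}(H,R)$, which is precisely the statement that $\underline{\varrho}$ is continuous at $(H,R)$.

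There is no real obstacle here: the lemma is a formal consequence of upper semicontinuity together with the sign of $\underline{\varrho}$, and neither the restriction to $\mathcal{P}\times\mathcal{E}$ nor any finer structure of the matrices is used. The only point requiring care is the justification that $\rho$, and hence each function $(H,R)\mapsto\rho(A_{i_1}\cdots A_{i_n})^{1/n}$, is continuous on $M_2(\mathbb{R})^2$; this is a standard fact from elementary perturbation theory.
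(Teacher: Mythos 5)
Your proof is correct and follows essentially the same argument as the paper: write $\underline{\varrho}$ as a countable infimum of continuous functions, deduce upper semicontinuity, and combine with non-negativity. The only (inessential) difference is that the paper uses the norm-based formula $\underline{\varrho}(A_1,A_2)=\inf_{n\geq 1}\inf_{i_1,\ldots,i_n}\|A_{i_1}\cdots A_{i_n}\|^{1/n}$ rather than the spectral-radius-based one; both yield upper semicontinuity in the same way.
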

\begin{proof}
Since for any $A_1,A_2 \in M_2(\mathbb{R})$ we have
\[\underline{\varrho}(A_1,A_2)=\inf_{n \geq 1} \inf_{i_1,\ldots,i_n \in \{1,2\}} \left\|A_{i_1}\ldots A_{i_n}\right\|^{\frac{1}{n}}\]
(see e.g. \cite[\S1.2.2]{Ju09}) the lower spectral radius of $(A_1,A_2)$ is equal to the infimum of a sequence of continuous functions of $(A_1,A_2)$ and hence is upper semi-continuous. Since it is also non-negative, this implies that every zero of the lower spectral radius is a point of continuity.
\end{proof}

\section{Behaviour on a set of full Lebesgue measure}
In this section we shall prove that the set of all $(H,R) \in \mathcal{P}\times\mathcal{E}$ such that the lower finiteness property is satisfied and such that the lower spectral radius of $(H,R)$ is nonzero is a set of full Lebesgue measure. 
In this section we shall find it convenient to consider angles $\theta$ as belonging to the quotient space $\R/2\pi\Z$ -- on which Lebesgue measure is a finite measure -- as opposed to $\R$, which has infinite Lebesgue measure.

Our first objective shall be to prove the following proposition:
\begin{proposition}\label{pr:measure-one}
Fix $\lambda,\alpha \in \R$ with $\lambda \neq 0$ and define
\[H:=\left(\begin{array}{cc}\lambda & \alpha \\0&0\end{array}\right),\qquad R_\theta:=\left(\begin{array}{cc}\cos \theta & -\sin \theta\\\sin\theta&\cos \theta\end{array}\right)\]
for every $\theta \in \R/2\pi\Z$. Then for Lebesgue almost every $\theta \in \R/2\pi\Z$ there exists $n \geq 0$ such that
\[\underline{\varrho}(H,R_\theta)=\rho(HR_\theta^n)^{\frac{1}{n+1}}>0.\]
\end{proposition}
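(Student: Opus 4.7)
The plan is to use the explicit formulas from Section 2 to reduce the proposition to a Diophantine-type statement and then resolve it via Borel--Cantelli. By Lemma \ref{le:formula}, $\underline{\varrho}(H,R_\theta) = \inf_{n \geq 0} \rho(HR_\theta^n)^{1/(n+1)}$, and by Lemma \ref{le:explicittrace} (applied with $k=1$, $n_1=1$, $m_1=n$), $\rho(HR_\theta^n) = |\lambda\cos n\theta + \alpha \sin n\theta|$. Writing $a_n(\theta) := \rho(HR_\theta^n)^{1/(n+1)}$, the target becomes: for a.e.\ $\theta$, the sequence $(a_n(\theta))_{n\geq 0}$ is strictly positive, tends to $1$, and is strictly less than $1$ for some $n$. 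Together these three facts force the infimum to be attained at a finite index and to be strictly positive.

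First I would handle strict positivity. For each $n \geq 1$, the equation $\lambda\cos n\theta + \alpha\sin n\theta = 0$ is equivalent (by Lemma \ref{le:ratio}) to $n\theta \in \theta_0 + \pi\mathbb{Z}$, giving only finitely many solutions in $\mathbb{R}/2\pi\mathbb{Z}$; at $n=0$ the value is $|\lambda|>0$. Excluding the resulting countable union of finite sets removes only a measure-zero collection of $\theta$'s.

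The main step is the asymptotic $a_n(\theta) \to 1$ for a.e.\ $\theta$. The upper bound $a_n(\theta) \leq (\lambda^2+\alpha^2)^{1/(2(n+1))} \to 1$ is immediate. For the lower bound, Lemma \ref{le:ratio} gives $\rho(HR_\theta^n) \geq C_0^{-1}\mathrm{dist}(n\theta,\theta_0+\pi\mathbb{Z})$. Since the map $\theta \mapsto n\theta$ preserves Lebesgue measure on $\mathbb{R}/2\pi\mathbb{Z}$ and the target set $\{\phi : \mathrm{dist}(\phi,\theta_0+\pi\mathbb{Z})<\delta\}$ has measure $4\delta$ (for small $\delta$), the set $E_n^{(\varepsilon)} := \{\theta : \mathrm{dist}(n\theta,\theta_0+\pi\mathbb{Z}) < n^{-1-\varepsilon}\}$ has Lebesgue measure at most $4n^{-1-\varepsilon}$. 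Summability over $n$ combined with Borel--Cantelli yields, for a.e.\ $\theta$, that $\mathrm{dist}(n\theta,\theta_0+\pi\mathbb{Z}) \geq n^{-1-\varepsilon}$ for all but finitely many $n$, so $a_n(\theta) \geq (C_0 n^{1+\varepsilon})^{-1/(n+1)} \to 1$.

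Finally, the strict inequality $\inf_n a_n(\theta) < 1$ for a.e.\ $\theta$ follows from Weyl equidistribution: for $\theta/\pi$ irrational (a condition excluding only a measure-zero set), the orbit $\{n\theta \bmod \pi\}$ is dense in $[0,\pi)$, so $n\theta$ approaches $\theta_0$ arbitrarily closely mod $\pi$ for some $n$, forcing $\rho(HR_\theta^n) < 1$ and hence $a_n(\theta) < 1$. Together with $a_n(\theta) \to 1$, this forces the infimum to be attained on a finite initial segment $\{0,1,\ldots,N\}$ at some index $n^*$; combined with positivity we conclude that $\underline{\varrho}(H,R_\theta) = a_{n^*}(\theta) > 0$. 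I expect the main obstacle to be the Borel--Cantelli rate estimate, which carries all the Diophantine content of the argument; the remaining ingredients are bookkeeping around Lemmas \ref{le:explicittrace}, \ref{le:formula}, and \ref{le:ratio}.
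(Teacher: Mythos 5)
Your argument is correct and follows essentially the same route as the paper: reduce via Lemmas~\ref{le:formula} and~\ref{le:explicittrace} to the Diophantine quantity $|\lambda\cos n\theta+\alpha\sin n\theta|^{1/(n+1)}$, use Lemma~\ref{le:ratio} together with the measure-preservation of $\theta\mapsto n\theta$ and Borel--Cantelli to get $a_n(\theta)\to 1$ a.e., use density of $\{n\theta\bmod\pi\}$ for $\theta/\pi$ irrational to get $\inf_n a_n(\theta)<1$, and remove a countable set to ensure strict positivity. The paper packages these as Lemmas~\ref{le:ae-lemma-2}, \ref{le:ae-lemma-1}, \ref{le:ae-lemma-3} and uses the Borel--Cantelli rate $e^{-\sqrt m}$ instead of your $n^{-1-\varepsilon}$, but both are summable and yield the same conclusion.
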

Let us make some observations on the proof of Proposition \ref{pr:measure-one}. By Lemma \ref{le:formula} we find that
\[\underline{\varrho}(H,R_\theta)=\inf_{n \geq 0} \rho(HR_\theta^n)^{\frac{1}{n+1}}\]
for every $\theta \in \R/2\pi\Z$, and by Lemma \ref{le:explicittrace} we have
\[\rho(HR_\theta^m)^{\frac{1}{m+1}}=\left|\lambda \cos m\theta + \alpha \sin m\theta\right|^{\frac{1}{m+1}}\]
for every $\theta \in \R/2\pi\Z$ and $m \geq 0$. To prove the proposition it is therefore sufficient to show that for Lebegue almost all $\theta \in \R/2\pi\Z$ the infimum
\[\inf_{m \geq 0}\left|\lambda \cos m\theta + \alpha \sin m\theta\right|^{\frac{1}{m+1}}\]
is attained for some integer $m \geq 0$, and is nonzero. The proposition thus follows from the combination of Lemmas \ref{le:ae-lemma-1}, \ref{le:ae-lemma-2} and \ref{le:ae-lemma-3} below:
\begin{lemma}\label{le:ae-lemma-1}
Let $\lambda, \alpha \in \R$. If $\theta \in \R$ is not a rational multiple of $\pi$, then
\begin{equation}\label{eq:moragisawesome}\inf_{ m \geq 0}\left|\lambda \cos m\theta + \alpha \sin m\theta\right|^{\frac{1}{m+1}}<1.\end{equation}
In particular \eqref{eq:moragisawesome} holds for Lebesgue almost every $\theta \in \R/2\pi\Z$.
\end{lemma}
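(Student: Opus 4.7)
The plan is to reduce the assertion to an elementary application of Kronecker's density theorem for irrational rotations. Observe first that the hypothesis ``$\theta$ is not a rational multiple of $\pi$'' is equivalent to $\theta/(2\pi)$ being irrational, so the orbit $\{m\theta \bmod 2\pi : m \geq 1\}$ is dense in $\mathbb{R}/2\pi\mathbb{Z}$ by standard results on irrational rotations.

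Write $f(\phi) := \lambda\cos\phi + \alpha\sin\phi$. In the degenerate case $\lambda = \alpha = 0$ the function $f$ vanishes identically and the infimum equals $0 < 1$, so assume $(\lambda,\alpha) \neq (0,0)$. As noted in the proof of Lemma \ref{le:ratio}, the function $f$ then has a zero $\phi_0 \in \mathbb{R}$, and by continuity there exists an open neighbourhood $U$ of $\phi_0 \bmod 2\pi$ on which $|f| < 1$. By density we can find some integer $m \geq 1$ with $m\theta \bmod 2\pi \in U$, so that $|f(m\theta)| < 1$, whence $|f(m\theta)|^{1/(m+1)} < 1$, establishing \eqref{eq:moragisawesome}. (If $|f(m\theta)| = 0$ outright then $|f(m\theta)|^{1/(m+1)} = 0 < 1$, so this case poses no difficulty.)

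The ``almost every'' statement is immediate: the set of rational multiples of $\pi$ inside $\mathbb{R}/2\pi\mathbb{Z}$ is countable and hence of Lebesgue measure zero. There is no substantive obstacle in this lemma; the only points requiring any care are the degenerate case $\lambda = \alpha = 0$ and the observation that $f(\phi_0) = 0$ automatically produces a neighbourhood $U$ on which $|f|$ is bounded below $1$.
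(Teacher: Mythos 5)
Your proof is correct and follows essentially the same route as the paper: both exploit the density of the orbit $\{m\theta \bmod 2\pi\}$ under the irrational rotation together with the fact that $\phi\mapsto\lambda\cos\phi+\alpha\sin\phi$ has a zero on the circle, and then observe that $|f(m\theta)|<1$ forces $|f(m\theta)|^{1/(m+1)}<1$. The explicit treatment of the degenerate case $\lambda=\alpha=0$ is a sensible precaution, though the paper's argument handles it implicitly.
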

\begin{proof}
For each integer $m$ the value of $|\lambda\cos m\theta + \alpha \sin m\theta|$ is unaffected by adding an integer multiple of $\pi$ to $\theta$, so we may freely identify $\theta \in \R$ with its equivalence class modulo $2\pi\Z$. To prove \eqref{eq:moragisawesome} it is clearly sufficient to show that
\begin{equation}\label{eq:inf-1}\inf\left\{ |\lambda\cos m\theta + \alpha \sin m\theta| \colon m \geq 0\right\}<1.\end{equation}
When $\theta$ is not a rational multiple of $\pi$ the set $\{m\theta \in \mathbb{R}/2\pi\mathbb{Z} \colon m \geq 0\}$ is dense in $\mathbb{R}/2\pi\mathbb{Z}$, so by continuity we have
\begin{equation}\label{eq:inf-2}\inf\left\{  |\lambda\cos m\theta + \alpha\sin m\theta| \colon m \geq 0\right\}=\min\left\{  |\lambda\cos \phi + \alpha \sin \phi| \colon \phi \in \mathbb{R}/2\pi\mathbb{Z}\right\}.\end{equation}
If $\alpha=0$ then $|\lambda\cos (\pi/2) + \alpha\sin (\pi/2)|=0$, and if $\alpha \neq 0$ then we have $|\lambda\cos \phi_0+\alpha\sin\phi_0|=0$ where $\phi_0 \in \mathbb{R}/2\pi\mathbb{Z}$ satisfies  $\tan \phi_0=-\lambda /\alpha$. In either case the minimum in \eqref{eq:inf-2} is zero and hence the inequality \eqref{eq:inf-1} is satisfied. Since for Lebesgue almost every $\theta \in \mathbb{R}/2\pi\mathbb{Z}$, $\theta$ is not a rational multiple of $\pi$, the result follows.
\end{proof}
\begin{lemma}\label{le:ae-lemma-2}
Let $\lambda,\alpha \in \mathbb{R}$. Then for Lebesgue almost every $\theta \in \mathbb{R}/2\pi\mathbb{Z}$,
\[\lim_{m \to \infty} \left|\lambda\cos m\theta + \alpha \sin m\theta\right|^{\frac{1}{m+1}}=1.\]
\end{lemma}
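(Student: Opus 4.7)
The plan is to reduce the claim to a Diophantine approximation statement via Lemma \ref{le:ratio}, and then apply Borel--Cantelli to a family of exponentially shrinking sets.

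First, note that $|\lambda \cos m\theta + \alpha \sin m\theta| \leq \sqrt{\lambda^2+\alpha^2}$ for every $m$ and $\theta$, and therefore $\limsup_{m\to\infty} |\lambda\cos m\theta + \alpha \sin m\theta|^{1/(m+1)} \leq 1$ for every $\theta$. The task therefore reduces to proving the matching lower bound on the $\liminf$ for a.e.\ $\theta$. By Lemma \ref{le:ratio}, there exist constants $C_0>1$ and a fixed $\theta_0$ (depending only on $\lambda,\alpha$) such that
\[\left|\lambda\cos m\theta + \alpha \sin m\theta\right| \geq \frac{1}{C_0}\,\mathrm{dist}(m\theta,\theta_0+\pi\Z)\]
for every $m \geq 0$ and $\theta \in \R$, so it is enough to show that for a.e.\ $\theta \in \R/2\pi\Z$,
\[\liminf_{m\to\infty} \mathrm{dist}(m\theta,\theta_0+\pi\Z)^{\frac{1}{m+1}} \geq 1.\]

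To prove this, fix $\varepsilon>0$ and let $A_m^\varepsilon:=\{\theta \in \R/2\pi\Z : \mathrm{dist}(m\theta,\theta_0+\pi\Z)< e^{-\varepsilon m}\}$. Since the map $\theta \mapsto m\theta$ pushes Lebesgue measure on $\R/2\pi\Z$ forward to itself, and the target set $\{y \in \R/2\pi\Z : \mathrm{dist}(y,\theta_0+\pi\Z)<e^{-\varepsilon m}\}$ has Lebesgue measure exactly $4e^{-\varepsilon m}$ (two intervals, each of length $2e^{-\varepsilon m}$), we have $|A_m^\varepsilon|=4e^{-\varepsilon m}$. Therefore $\sum_{m=1}^\infty |A_m^\varepsilon|<\infty$, and the Borel--Cantelli lemma implies that for a.e.\ $\theta$, only finitely many of the sets $A_m^\varepsilon$ contain $\theta$. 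For such $\theta$, eventually $\mathrm{dist}(m\theta,\theta_0+\pi\Z) \geq e^{-\varepsilon m}$, which gives $\liminf_{m\to\infty}\mathrm{dist}(m\theta,\theta_0+\pi\Z)^{1/(m+1)} \geq e^{-\varepsilon}$.

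To finish, intersect the full-measure sets obtained for $\varepsilon=1/k$ over all positive integers $k$. For every $\theta$ in the resulting full-measure set, the $\liminf$ in question is at least $e^{-1/k}$ for every $k$, hence at least $1$. Combined with the easy upper bound this yields equality to $1$, as required.

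No step here looks difficult: the matter is genuinely a routine Borel--Cantelli application once Lemma \ref{le:ratio} has been invoked. The one point to verify carefully is the measure computation of $A_m^\varepsilon$ and the fact that $\theta \mapsto m\theta$ preserves Lebesgue measure on $\R/2\pi\Z$, which accounts for the factor-of-$m$ cancellation that is essential to summability; this is the only place where a miscalculation could derail the argument.
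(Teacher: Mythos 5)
Your argument is correct and follows essentially the same route as the paper: reduce via Lemma~\ref{le:ratio} to a Diophantine statement about $\mathrm{dist}(m\theta,\theta_0+\pi\Z)$, observe that $\theta\mapsto m\theta$ preserves Lebesgue measure on $\R/2\pi\Z$, and apply Borel--Cantelli. The only difference in execution is the choice of shrinking radii: the paper uses the single sub-exponential threshold $e^{-\sqrt{m}}$, for which $(e^{-\sqrt{m}})^{1/(m+1)}\to 1$ directly and one Borel--Cantelli application suffices, whereas you use the exponential threshold $e^{-\varepsilon m}$ for each $\varepsilon>0$ and then take a countable intersection over $\varepsilon=1/k$; both are standard and equally valid.
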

\begin{proof}
Since trivially  $ |\lambda\cos m\theta + \alpha \sin m\theta| \leq |\lambda|+|\alpha|$ for every $m \geq 0$  we have
\[\limsup_{m \to \infty} \left|\lambda\cos m\theta + \alpha \sin m\theta\right|^{\frac{1}{m+1}} \leq 1\]
for every $\theta \in \mathbb{R}/2\pi\mathbb{Z}$. To prove the lemma we must show that the  limit inferior is greater than or equal to one for Lebesgue almost every $\theta \in \mathbb{R}/2\pi\mathbb{Z}$.

By Lemma \ref{le:ratio} there exist constants $C_0>1$ and $\theta_0 \in (- \frac{\pi}{2},\frac{\pi}{2})$ such that for every $\theta \in \R$ we have
\[\min_{k \in \mathbb{Z}}|\theta-\theta_0-k\pi|\leq C_0|\lambda\cos\theta+\alpha\sin\theta|.\]
In particular if $0 \leq \theta<2\pi$ and $|\lambda\cos\theta+\alpha\sin\theta|<t<\pi/2C_0$, then $\theta$ must belong to the set
\[(\theta_0-C_0t,\theta_0+C_0t)\cup (\theta_0+\pi-C_0t,\theta_0+\pi+C_0t)\cup (\theta_0+2\pi-C_0t,\theta_0+2\pi+C_0t).\]
We deduce that for every $t\in(0,\frac{\pi}{2C_0})$ the set
\[\left\{\theta \in [0,2\pi) \colon |\lambda\cos\theta+\alpha\sin\theta|<t\right\}\]
has Lebesgue measure not greater than $6C_0t$, and clearly this implies 
\[\mu\left(\left\{\theta \in \mathbb{R}/2\pi\mathbb{Z} \colon |\lambda\cos\theta+\alpha\sin\theta|<t\right\}\right)\leq 6C_0t\]
for the same range of real numbers $t$, where $\mu$ denotes Lebesgue measure on $\R/2\pi\Z$.
Since for every $m \geq 1$ the transformation $\mathbb{R}/2\pi\mathbb{Z}\to \mathbb{R}/2\pi\mathbb{Z}$ defined by $\theta \mapsto m\theta$ preserves Lebesgue measure, it follows that for all $m \geq 1$ and $t\in(0,\frac{\pi}{2C_0})$
\[\mu\left(\left\{\theta \in \mathbb{R}/2\pi\mathbb{Z} \colon |\lambda\cos m\theta+\alpha\sin m\theta|<t\right\}\right) \leq 6C_0t.\]
In particular
\[\mu\left(\left\{\theta \in \mathbb{R}/2\pi\mathbb{Z} \colon |\lambda\cos m\theta + \alpha \sin m\theta|<e^{-\sqrt{m}}\right\}\right)\leq 6C_0e^{-\sqrt{m}}\]
for every sufficiently large integer $m$. By the Borel-Cantelli Lemma we deduce that for Lebesgue almost every $\theta \in \mathbb{R}/2\pi\mathbb{Z}$ we have $|\lambda\cos m\theta + \alpha \sin m\theta|\geq e^{-\sqrt{m}}$ for every sufficiently large $m$, and therefore
\[\liminf_{m \to \infty} \left|\lambda\cos m\theta + \alpha\sin m\theta\right|^{\frac{1}{m+1}} \geq \liminf_{m \to \infty} \left(e^{-\sqrt{m}}\right)^{\frac{1}{m+1}}=1\]
as required to complete the proof.
\end{proof}
\begin{lemma}\label{le:ae-lemma-3}
Let $\alpha \in \mathbb{R}$. Then for  Lebesgue almost every $\theta \in \mathbb{R}/2\pi\mathbb{Z}$
\[\inf_{m \geq 0} \left|\lambda\cos m\theta + \alpha \sin m\theta\right|^{\frac{1}{m+1}}>0.\]
\end{lemma}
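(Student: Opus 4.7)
The plan is a straightforward Borel--Cantelli argument building on the measure estimate already developed in the proof of Lemma \ref{le:ae-lemma-2}. Assume throughout that $\lambda \neq 0$ (this is implicit from context, as $\lambda=0$ would make the whole quantity trivially zero). The key analytic input is Lemma \ref{le:ratio}, which supplies constants $C_0>1$ and $\theta_0$ such that for every $\phi \in \mathbb{R}$
\[|\lambda\cos\phi + \alpha\sin\phi|\geq C_0^{-1}\mathrm{dist}(\phi,\theta_0+\pi\mathbb{Z}).\]
Exactly as in Lemma \ref{le:ae-lemma-2}, this yields
\[\mu\Bigl(\bigl\{\theta\in\R/2\pi\Z : |\lambda\cos m\theta+\alpha\sin m\theta|<t\bigr\}\Bigr)\leq 6C_0 t\]
for every $m\geq 1$ and every $t\in(0,\pi/2C_0)$, using that $\theta\mapsto m\theta$ preserves Lebesgue measure on $\R/2\pi\Z$.

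Next, fix any $r\in(0,1)$ and apply this estimate with $t=r^{m+1}$. Since $r^{m+1}<\pi/2C_0$ for all sufficiently large $m$, we obtain
\[\mu\Bigl(\bigl\{\theta : |\lambda\cos m\theta+\alpha\sin m\theta|<r^{m+1}\bigr\}\Bigr)\leq 6C_0 r^{m+1}\]
for all large $m$, and $\sum_{m\geq 0} 6C_0 r^{m+1}<\infty$. The Borel--Cantelli lemma then implies that for Lebesgue almost every $\theta\in\R/2\pi\Z$ there exists an integer $M(\theta)\geq 0$ such that
\[|\lambda\cos m\theta+\alpha\sin m\theta|^{\frac{1}{m+1}}\geq r\qquad\text{for every }m\geq M(\theta).\]

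It remains to control the finitely many exponents $0\leq m<M(\theta)$. For each fixed $m\geq 0$, the function $\theta\mapsto \lambda\cos m\theta+\alpha\sin m\theta$ is a nontrivial real-analytic function on $\R/2\pi\Z$ (nontrivial because $\lambda\neq 0$), so its zero set has Lebesgue measure zero. Discarding the countable union of these zero sets removes only a null set, and for every remaining $\theta$ we have $|\lambda\cos m\theta+\alpha\sin m\theta|>0$ for all $m\geq 0$. In particular
\[c(\theta):=\min_{0\leq m<M(\theta)}\bigl|\lambda\cos m\theta+\alpha\sin m\theta\bigr|^{\frac{1}{m+1}}>0,\]
and combining with the previous step we conclude
\[\inf_{m\geq 0}\bigl|\lambda\cos m\theta+\alpha\sin m\theta\bigr|^{\frac{1}{m+1}}\geq \min\{c(\theta),r\}>0\]
for Lebesgue almost every $\theta$, as required.

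There is no serious obstacle: the measure bound is already proved in the preceding lemma, and the only point that needs a little care is verifying that the small-$m$ terms are almost-surely nonzero, which reduces to the observation that each $\theta\mapsto \lambda\cos m\theta+\alpha\sin m\theta$ has a measure-zero zero set and that a countable union of null sets is null.
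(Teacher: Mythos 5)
Your argument is correct, and it reaches the conclusion by essentially the same mechanism as the paper, with a minor variation in how the tail of the sequence is controlled. The paper combines the two preceding lemmas as black boxes: Lemma \ref{le:ae-lemma-2} (the limit equals $1$ almost everywhere) together with Lemma \ref{le:ae-lemma-1} (the infimum is $<1$ almost everywhere) to deduce that the infimum is \emph{attained} at some finite $m$ for a.e.\ $\theta$, and then observes that the set $\{\theta : \exists\,m,\ \lambda\cos m\theta+\alpha\sin m\theta=0\}$ is countable, hence null. You instead re-run the Borel--Cantelli computation from the proof of Lemma \ref{le:ae-lemma-2} directly, replacing the choice $t=e^{-\sqrt m}$ with $t=r^{m+1}$ for a fixed $r\in(0,1)$; this gives the stronger, more quantitative conclusion that the tail terms are eventually bounded below by $r$, after which you finish with the same null-set observation for the finitely many initial terms. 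The ingredients (the measure bound from Lemma \ref{le:ratio}, Borel--Cantelli, and the countability/nullity of the zero locus) are identical; the only differences are that you avoid invoking Lemma \ref{le:ae-lemma-1} and that you do not pass through the ``infimum is attained'' formulation, which is harmless. One cosmetic note: you may wish to state explicitly that the $m=0$ term equals $|\lambda|$, which is automatically positive since $\lambda\neq 0$, though your treatment of all initial indices uniformly already covers this.
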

\begin{proof}
It follows from the combination of Lemma \ref{le:ae-lemma-1} and Lemma \ref{le:ae-lemma-2} that for Lebesgue almost every $\theta \in \mathbb{R}/2\pi\mathbb{Z}$ the above infimum is attained at some integer $n\geq 0$ depending on $\theta$. To prove the lemma it is therefore sufficient to show that the set
\[\left\{\theta \in \mathbb{R}/2\pi\mathbb{Z} \colon \exists\text{ }m \geq 0\text{ such that }\lambda\cos m\theta + \alpha \sin m\theta=0\right\}\]
has measure zero; but this set is countable, since for each $m \geq 0$ the set
\[\left\{\theta \in \mathbb{R}/2\pi\mathbb{Z} \colon \lambda\cos m\theta + \alpha \sin m\theta=0\right\}\]
is finite, and therefore it has Lebesgue measure zero as required.
\end{proof}

As was remarked previously, the combination of Lemmas \ref{le:ae-lemma-1}, \ref{le:ae-lemma-2} and \ref{le:ae-lemma-3} proves Proposition \ref{pr:measure-one}. We may now deduce the following result, which is the main result of this section:
\begin{proposition}\label{pr:full-measure-2}
The set
\[X:=\left\{(H,R)\in \mathcal{P}\times\mathcal{E} \colon \underline{\varrho}(H,R)=\rho(HR^n)^{\frac{1}{n+1}}>0\text{ for some }n \geq 0\right\}\]
has full Lebesgue measure as a subset of $\mathcal{P}\times\mathcal{E}$.
\end{proposition}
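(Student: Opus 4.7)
The plan is to combine Proposition \ref{pr:measure-one} with Fubini's theorem by means of a convenient parametrization of $\mathcal{E}$. Specifically, every $R \in \mathcal{E}$ admits the unique decomposition $R = \gamma(\cos\theta\, I + \sin\theta\, J)$ with $\gamma := \sqrt{\det R} > 0$, $\theta \in (0, \pi)$ determined by $\cos\theta = \tr(R)/(2\gamma)$, and $J := (R - \gamma\cos\theta\, I)/(\gamma\sin\theta)$, the last of which satisfies $J^2 = -I$ by a brief application of the Cayley-Hamilton theorem. The set $\mathcal{J}$ of all $J \in M_2(\mathbb{R})$ with $J^2 = -I$ is a smooth two-dimensional submanifold (cut out by $a^2 + bc = -1$ in the traceless matrices), and the map $(\gamma, J, \theta) \mapsto R$ is a diffeomorphism from $(0,\infty) \times \mathcal{J} \times (0, \pi)$ onto $\mathcal{E}$. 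Composed with the identity on $\mathcal{P}$, it yields a diffeomorphism $\Phi\colon \mathcal{P} \times (0, \infty) \times \mathcal{J} \times (0, \pi) \to \mathcal{P} \times \mathcal{E}$, under which Lebesgue measure on the target corresponds to an absolutely continuous measure on the source.

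The principal step is to show that for each fixed $(H, \gamma, J)$, the set of $\theta \in (0, \pi)$ for which $(H, R(\gamma, J, \theta)) \notin X$ has one-dimensional Lebesgue measure zero. To do this, I pick $Q \in GL_2(\mathbb{R})$ with $Q^{-1} J Q = J_0$, where $J_0$ is the standard complex structure satisfying $R_\phi = \cos\phi\, I + \sin\phi\, J_0$ for all $\phi$; the choice of $Q$ depends only on $J$, and gives $Q^{-1}(R/\gamma)Q = R_\theta$ for every $\theta$. Imitating the last step of the proof of Lemma \ref{le:similar}, one further conjugates $Q^{-1}(H/\gamma)Q$ by a rotation $Q_0$, depending on $H, \gamma, J$ but not on $\theta$, so as to obtain the canonical form $H_0(\lambda, \alpha)$. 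Since rotations commute with every $R_\theta$, this second conjugation leaves $R_\theta$ intact, and $(H, R)$ is simultaneously similar to $\gamma \cdot (H_0(\lambda, \alpha), R_\theta)$ with parameters $\lambda, \alpha$ depending only on $(H, \gamma, J)$. The defining condition of $X$ is invariant under simultaneous conjugation and positive scaling, so $(H, R) \in X$ precisely when $(H_0(\lambda, \alpha), R_\theta) \in X$; Proposition \ref{pr:measure-one} then gives the desired conclusion for almost every $\theta$.

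Applying Fubini's theorem to $\Phi^{-1}(X^c)$, whose $\theta$-sections over $\mathcal{P} \times (0, \infty) \times \mathcal{J}$ each have one-dimensional Lebesgue measure zero by the preceding paragraph, shows that $\Phi^{-1}(X^c)$ has seven-dimensional product Lebesgue measure zero. Because $\Phi$ is a diffeomorphism, $X^c$ itself has Lebesgue measure zero in $\mathcal{P} \times \mathcal{E}$, which is the proposition. The most delicate point is ensuring that the canonical-form parameters $\lambda$ and $\alpha$ are genuinely independent of $\theta$; this independence rests on the separation of variables in $R = \gamma(\cos\theta\, I + \sin\theta\, J)$ together with the commutativity of $SO(2)$ with the rotation family $R_\theta$, and is the whole point of introducing the complex-structure parametrization rather than working directly with the parametrization of Lemma \ref{le:similar}, in which the auxiliary conjugation matrix would otherwise depend on $\theta$.
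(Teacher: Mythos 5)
Your proof is correct, and it takes a genuinely different route from the paper's. The paper's proof works \emph{forward}: it parametrizes the complement of $X$ as the image of a null set $\Lambda \subset SL_2^\pm(\mathbb{R}) \times (\mathbb{R}\setminus\{0\})^2 \times \mathbb{R} \times \mathbb{R}/2\pi\mathbb{Z}$ under a smooth (not necessarily injective) surjection onto $\mathcal{P}\times\mathcal{E}$, observes that $\Lambda$ is null (essentially by Fubini applied to Proposition \ref{pr:measure-one}, since in that parameter space the canonical-form data and $\theta$ are by construction independent coordinates), and then concludes by the fact that locally Lipschitz maps between equidimensional manifolds send null sets to null sets. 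You work \emph{backward}: you build a genuine diffeomorphism $\mathcal{P}\times(0,\infty)\times\mathcal{J}\times(0,\pi) \to \mathcal{P}\times\mathcal{E}$ via the complex-structure decomposition $R = \gamma(\cos\theta\,I + \sin\theta\,J)$, pull $X^c$ back, and apply Fubini directly to the $\theta$-sections. The step you correctly flag as the delicate one --- that the conjugating matrix $QQ_0$ and hence the parameters $(\lambda,\alpha)$ depend only on $(H,\gamma,J)$ and not on $\theta$ --- is precisely what the paper's choice of parameter space sidesteps, since there $(\lambda,\alpha)$ and $\theta$ are simply independent coordinates by fiat. What your approach buys is a bijective change of variables that makes the final measure-theoretic step immediate, at the cost of verifying the complex-structure diffeomorphism and the $\theta$-independence of the canonical form; what the paper's approach buys is that any smooth surjection suffices, with no need for injectivity or an explicit inverse. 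Both are sound, and the underlying engine (Proposition \ref{pr:measure-one} plus Fubini plus the conjugation-and-scaling invariance of the defining condition of $X$) is the same in either formulation.
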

\begin{proof}
Let us define $\Lambda \subseteq SL_2^\pm(\R)\times (\R \setminus \{0\})^2 \times \R \times \mathbb{R}/2\pi\mathbb{Z}$ to be the set of all $(A,\gamma,\lambda,\alpha,\theta)$ such that the pair
\[\left(\left(\begin{array}{cc}\lambda & \alpha \\0&0\end{array}\right),\left(\begin{array}{cc}\cos \theta & -\sin \theta\\\sin\theta&\cos \theta\end{array}\right)\right)\]
does not belong to $X$. By Proposition \ref{pr:measure-one} this set has zero Lebesgue measure as a subset of $SL_2^\pm(\R)\times (\R \setminus \{0\})^2 \times \R \times \mathbb{R}/2\pi\mathbb{Z}$. Let us now define a function $\Phi \colon SL_2^\pm(\R)\times (\R \setminus \{0\})^2 \times \R \times \mathbb{R}/2\pi\mathbb{Z}\to \mathcal{P}\times\mathcal{E}$ by
\[\Phi(A,\gamma,\lambda,\alpha,\theta) =\gamma\left(A\left(\begin{array}{cc}\lambda & \alpha \\0&0\end{array}\right)A^{-1},A \left(\begin{array}{cc}\cos \theta & -\sin \theta\\\sin\theta&\cos \theta\end{array}\right)A^{-1}\right).\]
Using Lemma \ref{le:similar} it is easy to see that $(H,R) \in \mathcal{P}\times\mathcal{E}$ fails to belong to $X$ if and only if $(H,R) \in \Phi(\Lambda)$. The function $\Phi$ is a smooth map between 7-dimensional manifolds, so it is locally Lipschitz and maps sets whose  7-dimensional Lebesgue measure is equal to zero onto other sets whose 7-dimensional Lebesgue measure is equal to zero. In particular $\Phi(\Lambda)$ has Lebesgue measure zero as required.
\end{proof}

\section{Positive lower spectral radius without the lower finiteness property}
In this section we investigate those pairs $(H,R) \in \mathcal{P}\times\mathcal{E}$ for which the lower spectral radius is positive but the lower finiteness property is not satisfied, showing that the set of all such pairs in dense in $\mathcal{P}\times\mathcal{E}$. This outcome is obtained as a corollary of the following result:
\begin{proposition}\label{pr:ctdfrn}
Fix $\lambda \in \mathbb{R}\setminus \{0\}$. For each $\alpha \in \mathbb{R}$ and each $\theta \in \mathbb{R}/2\pi\mathbb{Z}$ define
\[H_\alpha:=\left(\begin{array}{cc}\lambda & \alpha \\0&0\end{array}\right),\qquad R_\theta:=\left(\begin{array}{cc}\cos \theta & -\sin \theta\\\sin\theta&\cos \theta\end{array}\right).\]
Then the set of all $(\alpha,\theta) \in \R^2$ such that the pair $(H_\alpha,R_\theta)$ has positive lower spectral radius and does not have the lower finiteness property is a dense subset of $\R^2$.
\end{proposition}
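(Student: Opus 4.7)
The strategy is to use Lemma~\ref{le:formula} and Lemma~\ref{le:explicittrace} to rewrite
\[\underline{\varrho}(H_\alpha,R_\theta)=\inf_{n\ge 0} f_n(\alpha,\theta),\qquad f_n(\alpha,\theta):=|\lambda\cos n\theta+\alpha\sin n\theta|^{1/(n+1)},\]
and then characterize when the lower finiteness property fails. Using Lemma~\ref{le:explicittrace} and cyclic invariance of $\rho$, any nontrivial product can be reduced to the form $H^{n_k}R^{m_k}\cdots H^{n_1}R^{m_1}$ with $n_i,m_i\ge1$, and a direct calculation expresses its normalized spectral radius as the weighted geometric mean of $|\lambda|=\rho(H)$ (with weight $\sum(n_i-1)/T$) and the numbers $f_{m_i}(\alpha,\theta)$ (with weights $(m_i+1)/T$), where $T$ is the product's length. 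Since each factor is at least $\underline{\varrho}(H_\alpha,R_\theta)$, this mean equals $\underline{\varrho}$ only if every factor with positive weight does. After dealing separately with pure-$H$ and pure-$R$ products (giving $|\lambda|$ and $1$), this shows that whenever $\underline{\varrho}(H_\alpha,R_\theta)\in(0,\min(1,|\lambda|))$, the lower finiteness property holds if and only if $f_n(\alpha,\theta)=\underline{\varrho}(H_\alpha,R_\theta)$ for some $n\ge1$.

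Given any open $V\subset\R^2$, I would choose a product neighbourhood $V_\alpha\times V_\theta\subset V$ and a Diophantine irrational multiple of $\pi$, say $\theta^*\in V_\theta$, for which $|\sin n\theta^*|\ge C/n^s$ for some $C,s>0$; such $\theta^*$ form a full-measure subset of $V_\theta$. Writing $\alpha_n:=-\lambda\cot(n\theta^*)$, the set $\{\alpha_n\}_{n\ge 1}$ is dense in $\R$ and
\[f_n(\alpha,\theta^*)=|\sin n\theta^*|^{1/(n+1)}|\alpha-\alpha_n|^{1/(n+1)}.\]
I would pick $c\in(0,\min(1,|\lambda|))$ small enough that $\sum_{n\ge 1}2c^{n+1}/|\sin n\theta^*|$, which converges by the Diophantine bound, is strictly less than the length of $V_\alpha$.

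The core of the argument is then a Liouville-style nested interval construction producing $\alpha\in V_\alpha$ with $\underline{\varrho}(H_\alpha,R_{\theta^*})=c$ but with the infimum unattained. I would inductively build closed intervals $J_0\supset J_1\supset\cdots$ in $V_\alpha$, integers $N_0<N_1<\cdots$ and indices $n_k\le N_k$ such that: (i) for every $\alpha\in J_k$ and every $n$ with $0\le n\le N_k$, $f_n(\alpha,\theta^*)>c$; (ii) for every $\alpha\in J_k$, $f_{n_k}(\alpha,\theta^*)\in(c,c(1+\varepsilon_k))$ with $\varepsilon_k\to 0$; (iii) $\operatorname{diam}(J_k)\to 0$. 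At stage $k$ I would pick $n_k>N_{k-1}$ so that $\alpha_{n_k}$ lies in the interior of $J_{k-1}$ (density of $\{\alpha_n\}$), then take $J_k$ to be a short interval in which $|\alpha-\alpha_{n_k}|/(c^{n_k+1}/|\sin n_k\theta^*|)$ is confined to a window $[1+\delta_k,1+\varepsilon_k]$, then shrink $J_k$ further so it avoids the finitely many forbidden neighbourhoods $\{|\alpha-\alpha_n|\le c^{n+1}/|\sin n\theta^*|\}$ for $n\le N_k$ with $n\ne n_k$. The smallness of $c$ guarantees these exclusions never exhaust the available room. The point $\alpha\in\bigcap_k J_k$ then satisfies $f_n(\alpha,\theta^*)>c$ for every $n\ge 0$ and $f_{n_k}(\alpha,\theta^*)\to c$. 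Hence $\underline{\varrho}(H_\alpha,R_{\theta^*})=c\in(0,\min(1,|\lambda|))$ with the infimum unattained, and by the first paragraph the lower finiteness property fails at $(\alpha,\theta^*)\in V$.

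The main obstacle will be the simultaneous control in the inductive step: we must keep every $f_n$ strictly above $c$ uniformly in $n$, while driving $f_{n_k}$ down to $c$ along the chosen subsequence. This balance rests on the Diophantine hypothesis for $\theta^*$, which (via Lemma~\ref{le:ratio} applied for each fixed $\alpha$) ensures that the total length of the forbidden intervals is summable and is dominated by the length of $V_\alpha$, so that each $J_k$ can be prescribed simultaneously to sit inside its tight window near $\alpha_{n_k}$ and outside every previously active forbidden neighbourhood.
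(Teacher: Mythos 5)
Your approach is genuinely different from the paper's. The paper fixes $\alpha$ of the special form $\lambda/\alpha=-\tan(\pi a/b)$ with $b$ prime, and then constructs $\theta$ directly via a bespoke continued fraction expansion (Lemmas~\ref{le:minor-lemma-ctd-frn}--\ref{le:main-lemma-ctd-frn}), which gives exact control over $\mathrm{dist}(q_n\theta,\cdot)$ at \emph{every} denominator simultaneously, and lower bounds $\mathrm{dist}(k\theta,\cdot)$ at all non-denominators via the standard continued-fraction inequalities. You instead fix a Diophantine $\theta^*$ and try to construct $\alpha$ by nested intervals. Your first paragraph (characterising the lower finiteness property as ``$\inf_n f_n$ is attained'' whenever $\underline{\varrho}<\min(1,|\lambda|)$) is correct, though it essentially re-derives what already follows from Lemma~\ref{le:newformula} combined with Lemma~\ref{le:formula}.

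The gap is in the inductive step of the nested-interval construction. You propose to pick $n_k>N_{k-1}$ so that $\alpha_{n_k}$ is interior to $J_{k-1}$, citing only density of $\{\alpha_n\}$, and then to ``shrink $J_k$ further so it avoids the finitely many forbidden neighbourhoods'' for $N_{k-1}<n\le N_k$, $n\ne n_k$, asserting that ``the smallness of $c$ guarantees these exclusions never exhaust the available room.'' That assertion is not justified, and it is precisely the hard part. The initial $J_k$ (the window where $|\alpha-\alpha_{n_k}|\in[(1+\delta_k)r_{n_k},(1+\varepsilon_k)r_{n_k}]$, with $r_n:=c^{n+1}/|\sin n\theta^*|$) has length of order $r_{n_k}\asymp c^{n_k+1}$, which is exponentially small in $n_k$. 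But for $N_{k-1}<n<n_k$ the forbidden intervals $F_n=\{|\alpha-\alpha_n|<r_n\}$ can be \emph{much} larger than $r_{n_k}$ (indeed $r_n/r_{n_k}$ is of order $c^{n-n_k}$, up to polynomial factors from $|\sin n\theta^*|$), so a single such $F_n$ can swallow the entire window around $\alpha_{n_k}$ if $\alpha_n$ happens to lie near $\alpha_{n_k}$. Density of $\{\alpha_n\}$ in $J_{k-1}$ tells you nothing about whether this happens: the union $\bigcup_n F_n$ contains all the $\alpha_n$ and is therefore itself dense, so its complement has empty interior, and density arguments cannot by themselves place $\alpha_{n_k}$ safely away from earlier $F_n$. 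To repair the step one would need a quantitative choice of $n_k$ — for instance using equidistribution of $\{\alpha_n\}$ (not just density) together with a measure count comparing $\sum_{N_{k-1}<n<n_k}r_n$ against $r_{n_k}$ and the length of $J_{k-1}$, or a different scheme that first clears out $F_n$ for all $n$ up to a large $M$ and only then selects $n_k>M$ with a window bounded away from the remaining tail. As it stands, the claim that the exclusions ``never exhaust the available room'' does not follow from the total-length bound $\sum_n 2r_n<|V_\alpha|$, because the relevant comparison is against the length of the exponentially shrinking $J_k$, not against $|V_\alpha|$. This is exactly the difficulty that the paper's continued-fraction construction of $\theta$ is engineered to sidestep, since there one controls all convergent and non-convergent indices simultaneously by prescribing the partial quotients.
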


If $\alpha, \lambda \in \R$ are fixed, with $\lambda \neq 0$, then Lemma \ref{le:formula} together with Lemma \ref{le:explicittrace} tells us that we have
\[\underline{\varrho}(H_\alpha,R_\theta)=\inf_{m \geq 0}\rho(H_\alpha R_\theta^m)^{\frac{1}{1+m}}=\inf_{m \geq 0}|\lambda \cos m\theta +\alpha \sin m\theta|^{\frac{1}{1+m}}.\]
On the other hand the combination of Lemma \ref{le:newformula} and Lemma \ref{le:explicittrace} tells us that the lower finiteness property for $(H_\alpha,R_\theta)$ holds if and only if either $\underline{\varrho}(H_\alpha,R_\theta)=\rho(R_\theta)=1$, or there exists $m \geq 0$ such that
\[\underline{\varrho}(H_\alpha,R_\theta)=|\lambda \cos m\theta +\alpha \sin m\theta|^{\frac{1}{1+m}}.\]
Now, Lemma \ref{le:ratio} tells us that the quantity $|\lambda \cos m\theta +\alpha \sin m\theta|$ is well-approximated by the quantity $\mathrm{dist}(m\theta,\theta_0+\pi\Z)$, where $\theta_0$ is the unique zero of the function $\phi \mapsto |\lambda \cos \phi +\alpha \sin \phi|$ in $(-\frac{\pi}{2},\frac{\pi}{2}]$. The behaviour of $|\lambda \cos m\theta +\alpha \sin m\theta|^{1/(1+m)}$ for large $m$ is thus linked with the following problem of inhomogenous Diophantine approximation: for large $m$, how close to an integer is $\pi^{-1}(m\theta-\theta_0)$?

To investigate this problem we apply the theory of continued fractions. In the sequel we will refer to the classic treatment of the subject by A. Ya. Khinchin for specific results. We briefly recall some relevant definitions. Recall that if $a_1,a_2,\ldots$ is a finite or infinite sequence of positive integers then we define the sequence of \emph{convergents} $p_k/q_k$ by
\[\frac{p_0}{q_0}:=\frac{0}{1},\qquad\frac{p_1}{q_1}:=\frac{1}{a_1},\qquad \frac{p_2}{q_2}:=\frac{1}{a_1+\frac{1}{a_2}},\qquad\frac{p_3}{q_3}:=\frac{1}{a_1+\frac{1}{a_2+\frac{1}{a_3}}},\]
\[\frac{p_n}{q_n}:=\frac{1}{a_1+\frac{1}{a_2+\frac{1}{a_3+\cdots \frac{1}{a_n}}}}\]
for each $n$ for which the integers $a_1,\ldots,a_n$ are defined. The above representations $p_k/q_k$ in least terms can equivalently be defined by the recurrence relations
\begin{equation}\label{eq:stdcvgt}p_{k+1}:=a_{k+1}p_k+p_{k-1},\qquad q_{k+1}:=a_{k+1}q_k+q_{k-1}\end{equation}
for every $k \geq 0$, where we additionally define $p_{-1}:=1, q_{-1}:=0$.
If $\theta \in (0,1)$ is irrational then there exists a unique sequence of positive integers $(a_n)_{n=1}^\infty$ for which the associated sequence of convergents $p_k/q_k$ converges to $\theta$, and for every sequence of positive integers $(a_n)_{n=1}^\infty$ the corresponding sequence of convergents is a convergent sequence with an irrational limit. If $(a_n)$ is such a sequence then we denote the limit of the corresponding sequence of convergents by $[a_1,a_2,\ldots]$; more generally, if $(a_n)_{n=0}^\infty$ is a sequence such that $a_0 \in \mathbb{Z}$ and $a_n \in \mathbb{N}$ for all $n \geq 1$, then we write $[a_0;a_1,a_2,\ldots]:=a_0+[a_1,a_2,\ldots]$. The signifance of continued fractions for this work is their property of being good rational approximations: if $\theta=[a_1,a_2,\ldots]$ then we have
\[\frac{1}{q_{n+1}+q_n}<\left|q_n\theta-p_n\right|<\frac{1}{q_{n+1}}\]
for every $n \geq 1$, and if $q_n<\ell<q_{n+1}$ then
\[\left|\ell\theta-k\right|\geq \frac{1}{2\ell}\]
for every $k \in \mathbb{Z}$ which shares no common factors with $\ell$, see for example Theorems 9, 13 and 19 of \cite{Kh97}. The proof of Proposition \ref{pr:ctdfrn} will be executed by constructing, for each fixed $\lambda$ and in the case where $\alpha \in \R$ belongs to a suitable dense set, a dense set of $\theta \in \mathbb{R}$ whose convergents satisfy a certain precise growth condition.

The proof of Proposition \ref{pr:ctdfrn} begins with the following preliminary result:
\begin{lemma}\label{le:minor-lemma-ctd-frn}
Let $\theta_0 \in (-\frac{b}{2},\frac{b}{2})\setminus \mathbb{Q}$ and $\frac{a}{b} \in \mathbb{Q}$ in least terms, where $b$ is prime and $a$ is nonzero, and let $\varepsilon>0$. Then there exist integers $N \geq 1$ and $a_0,a_1,\ldots,a_N$ with the following properties: the associated sequence of convergents $(p_k/q_k)_{k=0}^N$ satisfies
\begin{equation}\label{eq:galo}p_N+a_0q_N \equiv p_{N-1}+a_0q_{N-1} \equiv a \mod b,\end{equation}
and if $(a_n)_{n=0}^\infty$ is any infinite extension of the finite sequence $a_0,a_1,\ldots,a_N$ then $|[a_0;a_1,a_2,\ldots]-\theta_0|<\varepsilon$.
\end{lemma}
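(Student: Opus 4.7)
My approach is to build the sequence $a_0, a_1, \ldots, a_N$ in two stages: a long initial segment taken from the regular continued fraction expansion of $\theta_0$ (to secure the approximation condition) followed by at most three extra terms (to arrange the congruence condition). First, write $\theta_0 = [a_0; a_1, a_2, \ldots]$ with $a_0 := \lfloor \theta_0 \rfloor$, let $(p_k/q_k)$ be the associated convergents, and choose $M$ large enough that $1/(q_M(q_M + q_{M-1})) < \varepsilon$. The standard identity
\[
[a_0; a_1, \ldots, a_M, \theta'] = \frac{\theta' p_M + p_{M-1}}{\theta' q_M + q_{M-1}}
\]
shows that as $\theta'$ varies over $[1,\infty)$ the right-hand side traces an interval of length $1/(q_M(q_M + q_{M-1}))$. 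Both $\theta_0$ and every infinite extension of $a_0,\ldots,a_M$ lie in this interval, so the approximation conclusion will hold no matter how the sequence is extended beyond position $M$.

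Next, introduce $r_k := p_k + a_0 q_k \bmod b$. The recurrence \eqref{eq:stdcvgt} for $(p_k, q_k)$ gives $r_{k+1} \equiv a_{k+1} r_k + r_{k-1} \pmod b$, so modulo $b$ the pair $(r_{k-1}, r_k) \in (\Z/b\Z)^2$ evolves linearly with the residue of $a_{k+1}$ as the only input; since any residue class modulo $b$ contains positive integers, we may freely prescribe $a_{k+1} \bmod b$ while keeping $a_{k+1}$ a legitimate positive integer. The target $(r_{N-1}, r_N) \equiv (a, a) \pmod b$ is thus a reachability question for a finite-state dynamical system on $(\Z/b\Z)^2$. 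The only obstruction is the fixed point $(0,0)$, but this state cannot arise as $(r_{M-1}, r_M)$: substituting $p_i \equiv -a_0 q_i \pmod b$ into the determinant identity $p_M q_{M-1} - p_{M-1} q_M = \pm 1$ would force $\pm 1 \equiv 0 \pmod b$, contradicting primality of $b$.

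It remains to steer from $(r_{M-1}, r_M)$ to $(a, a)$. Primality of $b$ makes every nonzero residue invertible in $\Z/b\Z$, and the hypothesis $\gcd(a,b) = 1$ makes $a$ nonzero modulo $b$. If $r_M \not\equiv 0$, then two extensions suffice: pick a positive integer $a_{M+1}$ with $a_{M+1} r_M + r_{M-1} \equiv a$, giving $r_{M+1} \equiv a$, then pick positive $a_{M+2}$ with $a_{M+2} a + r_M \equiv a$, giving $r_{M+2} \equiv a$, and set $N := M + 2$. If instead $r_M \equiv 0$, then $r_{M-1} \not\equiv 0$ by the previous paragraph; any positive $a_{M+1}$ then yields $r_{M+1} = r_{M-1} \not\equiv 0$, reducing to the previous case with $N := M + 3$. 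The main obstacle in the proof is really just this case analysis on whether $r_M$ vanishes — together with the need to be sure that the all-zero state is genuinely avoided, which is where the primality of $b$ is essential; everything else is a routine application of standard continued-fraction estimates and linear algebra modulo the prime $b$.
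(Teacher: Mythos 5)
Your proof is correct and follows the same overall strategy as the paper: truncate the continued fraction expansion of $\theta_0$ deep enough to lock in the approximation estimate, then append a small number of carefully chosen partial quotients to steer the residues $r_k := p_k + a_0 q_k \bmod b$ into the target class. The technical details differ in three places. For the approximation, you use the M\"obius-interval observation that all extensions of $a_0,\ldots,a_M$ lie in an interval of length $1/(q_M(q_M+q_{M-1}))$, whereas the paper invokes Khinchin's Theorem 16 to bound $|\theta - \theta_0|$ by $2/Q_{n_0}^2$; both are standard and equally effective. For ruling out $(r_{M-1},r_M) \equiv (0,0)$, you use the determinant identity $p_k q_{k-1} - p_{k-1} q_k = \pm 1$, while the paper argues by inductive descent along the recurrence back to $r_{-1}=1$; yours is a bit more compact, but both are routine. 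The genuinely substantive difference is in the steering step. After forcing $r_{n_0+1}\equiv a$, the paper sets $a_{n_0+2}:=b$ and asserts $r_{n_0+2}\equiv a$; but $r_{n_0+2}=b\,r_{n_0+1}+r_{n_0}\equiv r_{n_0}\pmod b$, which is nonzero yet has no reason to equal $a$. Your choice $a_{M+2}\equiv(a-r_M)a^{-1}\pmod b$, which uses the invertibility of $a$ modulo the prime $b$, actually delivers $r_{M+2}\equiv a$, so your argument repairs a small oversight in the published version while reaching the same conclusion.
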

\begin{proof}
Let $(b_n)_{n=0}^\infty$ be the unique sequence of integers such that $\theta_0=[b_0;b_1,b_2,\ldots]$, where $b_n \geq 1$ for every $n \geq 1$. Let $(P_n/Q_n)_{n=0}^\infty$ be the associated sequence of convergents, and let $n_0\geq 1$ be large enough that $1/Q_{n_0}^2<\varepsilon/2$. Define $a_k:=b_k$ for $0 \leq k \leq n_0$. We will show that there exists a finite extension $(a_n)_{n=0}^N$ of $(a_n)_{n=0}^{n_0}$ such that the associated sequence of convergents $(p_k/q_k)_{k=0}^N$ satisfies \eqref{eq:galo}. We will subsequently deduce that the second desired property also holds. 

Let us first show that it is impossible for the integers $p_{n_0}+a_0q_{n_0}$ and $p_{n_0-1}+a_0q_{n_0-1}$ to both be divisible by $b$. We prove this claim by contradiction. If $p_k+a_0q_k$ and $p_{k-1}+a_0q_{k-1}$ are both divisible by $b$ for some integer $k \geq 1$, then it follows from the relation \eqref{eq:stdcvgt} that
\[p_k+a_0q_k = a_k(p_{k-1}+a_0q_{k-1})+p_{k-2}+a_0q_{k-2}\]
and therefore $p_{k-2}+a_0q_{k-2}$ is also divisible by $b$. If $p_{n_0}+a_0q_{n_0}$ and $p_{n_0-1}+a_0q_{n_0-1}$ are both divisible by $b$, then by inductive descent it follows that $p_{-1}+a_0q_{-1}$ is divisible by $b$, but $p_{-1}+a_0q_{-1}=1$, which is a contradiction since $b$ is prime. This contradiction proves the claim.

Let us consider the possibilities which remain. If  $p_{n_0}+a_0q_{n_0}$ is not divisible by $b$, then since $b$ is prime we may choose an integer $k\geq 1$ such that
\[k(p_{n_0}+a_0q_{n_0})+p_{n_0-1}+a_0q_{n_0-1} \equiv a \mod b.\]
Taking $a_{n_0+1}:=k$ we find that $p_{n_0+1}+a_0q_{n_0+1} \equiv a \mod b$. If we now take $a_{n_0+2}:=b$ then 
\[p_{n_0+2}+a_0q_{n_0+2} = b(p_{n_0+1}+a_0q_{n_0+1})+p_{n_0}+a_0q_{n_0} \equiv a \mod b\]
so we can now take $N:=n_0+2$ and we have obtained the desired extension of $(a_n)_{n=0}^{n_0}$ in this case. If on the other hand $p_{n_0}+a_0q_{n_0}$ \emph{is} divisible by $b$, then by the previous claim $p_{n_0-1}+a_0q_{n_0-1}$ is not. Taking $a_{n_0+1}:=1$ we have
\[p_{n_0+1}+a_0q_{n_0+1} =p_{n_0}+a_0q_{n_0}+p_{n_0-1}+a_0q_{n_0-1}\equiv p_{n_0-1}+a_0q_{n_0-1} \mod b.\]
This reduces the problem to the preceding case but with $n_0+1$ in place of $n_0$. In either event we are able to construct a finite extension $(a_n)_{n=0}^N$ of $(a_n)_{n=0}^{n_0}$ such that the associated sequence of convergents $(p_k/q_k)_{k=0}^N$ satisfies \eqref{eq:galo} as desired.

Let us now suppose that $(a_n)_{n=0}^\infty$ is an infinite extension of the finite sequence $(a_n)_{n=0}^N$ with $a_n \geq 1$ for every $n >N$, and let $(p_k/q_k)_{k=0}^\infty$ be the associated sequence of convergents. Let $\theta:=[a_0;a_1,a_2,\ldots] \in \R$. The sequence of convergents satisfies $p_k/q_k=P_k/Q_k$ for $k \leq n_0$ since the sequences $(a_n)$ and $(b_n)$ coincide up to this point. By \cite[Theorem 16]{Kh97} it follows that
\[\left|\theta-\theta_0\right| \leq \left|\theta-\frac{P_{n_0}}{Q_{n_0}}\right|+\left|\frac{P_{n_0}}{Q_{n_0}}-\theta_0\right| \leq \frac{1}{Q_{n_0}^2}+\frac{1}{Q_{n_0}^2} <\varepsilon\]
since by the definition of $n_0$ we have $1/Q_{n_0}^2<\varepsilon/2$. The proof is complete.
\end{proof}

The core of the proof of Proposition \ref{pr:ctdfrn} rests in the following long lemma:
\begin{lemma}\label{le:main-lemma-ctd-frn}
Let $\frac{a}{b} \in \mathbb{Q}$ in least terms where $b$ is prime and $a$ is nonzero, let $K>1$, let $\theta_0 \in (-\frac{b}{2},\frac{b}{2})\setminus \mathbb{Q}$ and let $\varepsilon>0$. Then there exists an infinite sequence of integers $(a_n)_{n=0}^\infty$, where $a_n \geq 1$ for every $n \geq 1$, such that the irrational number $\theta:=[a_0;a_1,a_2,\ldots]$ belongs to $(-\frac{b}{2},\frac{b}{2})$, satisfies $|\theta-\theta_0|<\varepsilon$, and has the following additional properties: for every $n \geq 0$ there exists $q>n$ such that
\begin{equation}\label{eq:first-lemma-bit}\mathrm{dist}(n\theta,a+b\mathbb{Z})^{\frac{1}{n+1}}>K^{\frac{1}{n+1}}\mathrm{dist}(q\theta,a+b\mathbb{Z})^{\frac{1}{q+1}},\end{equation}
and furthermore
\begin{equation}\label{eq:second-lemma-bit}\inf_{n \geq 0}\mathrm{dist}(n\theta,a+b\mathbb{Z})^{\frac{1}{n+1}}>0.\end{equation}
\end{lemma}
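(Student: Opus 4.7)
The plan is to build the continued-fraction expansion of $\theta$ block by block. After the $k$-th block I will have a prefix $a_0, a_1, \ldots, a_{N_k}$ whose convergents satisfy $p_{N_k}+a_0q_{N_k}\equiv p_{N_k-1}+a_0q_{N_k-1}\equiv a\pmod{b}$; the first block is produced directly by Lemma \ref{le:minor-lemma-ctd-frn} applied with a value of $\varepsilon$ shrunk (if necessary) so that $\theta\in(-b/2,b/2)$ holds for every admissible extension. At step $k+1$ I will append one very large partial quotient $A_k := a_{N_k+1}$, followed by a short suffix $a_{N_k+2},\ldots,a_{N_{k+1}}$ with partial quotients of size at most $b$ (chosen by rerunning the congruence-restoring construction inside the proof of Lemma \ref{le:minor-lemma-ctd-frn}), so that the congruence is reinstated at $N_{k+1}$. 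Because the congruence at $N_k$ places $p_{N_k}+a_0q_{N_k}\in a+b\mathbb{Z}$, the classical two-sided estimate $1/(q_{N_k+1}+q_{N_k})<|q_{N_k}\theta-(p_{N_k}+a_0q_{N_k})|<1/q_{N_k+1}$ gives control over $\mathrm{dist}(q_{N_k}\theta,a+b\mathbb{Z})$, placing it anywhere in the interval roughly $(1/(2q_{N_k+1}),1/q_{N_k+1})$ via the choice of $A_k$.

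The heart of the argument is showing that for a suitably small $c>0$ fixed at the outset, $A_k$ can always be chosen so that two competing demands are met. To secure \eqref{eq:first-lemma-bit} with $q=q_{N_k}$ for every $n\in\{0,1,\ldots,q_{N_{k-1}}\}$ (only finitely many constraints), I need $q_{N_k+1}>(K/D_n)^{(q_{N_k}+1)/(n+1)}$ for each such $n$, where $D_n:=\mathrm{dist}(n\theta,a+b\mathbb{Z})$; these finitely many values are essentially frozen once $A_k$ is fixed, since the subsequent bounded partial quotients change $\theta$ by at most $O(1/q_{N_k+1}^2)$. To maintain the slightly strengthened inductive hypothesis $D_n>Kc^{n+1}$ at the newly relevant index $n=q_{N_k}$, it suffices to enforce $q_{N_k+1}<(1/c)^{q_{N_k}+1}/(2K)$, since then Khinchin's lower bound yields $D_{q_{N_k}}>1/(2q_{N_k+1})>Kc^{q_{N_k}+1}$. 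A log-comparison of these two bounds on $q_{N_k+1}$ shows they are simultaneously satisfiable precisely when $D_n>Kc^{n+1}$ already holds for every $n\leq q_{N_{k-1}}$, so carrying this strengthened inequality through the induction is what keeps the scheme going; choosing $c$ so small that $cK<\min(|a|,b-|a|)=\mathrm{dist}(0,a+b\mathbb{Z})$ initiates it at $n=0$.

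It remains to verify the strengthened hypothesis for the new intermediate indices $n\in[q_j,q_{j+1})$ with $N_k<j\leq N_{k+1}$. Since the suffix partial quotients are at most $b$ we have $q_{j+1}\leq (b+1)q_j$, and the best-approximation bound gives $\mathrm{dist}(n\theta,a+b\mathbb{Z})\geq|q_j\theta-p_j|\geq 1/(2q_{j+1})\geq 1/(2(b+1)n)$; this exceeds $Kc^{n+1}$ whenever $c<(2(b+1)Kn)^{-1/(n+1)}$, which holds for every $n\geq 1$ provided $c$ was chosen sufficiently small at the outset (the infimum of the right-hand side over $n\geq 1$ is a positive constant depending only on $b$ and $K$). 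The step I expect to be the main obstacle is the uniform bookkeeping verifying that the compatibility window $\bigl((K/D_n)^{(q_{N_k}+1)/(n+1)},\,(1/c)^{q_{N_k}+1}/(2K)\bigr)$ for $q_{N_k+1}$ is non-empty at every block and that no intermediate denominator produced by the Lemma \ref{le:minor-lemma-ctd-frn}-style suffix inadvertently spoils the strengthened hypothesis; both reduce to the persistence of the inequality $D_n>Kc^{n+1}$, which follows from the Khinchin lower bound together with the initial smallness of $c$.
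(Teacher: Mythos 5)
Your scheme diverges from the paper's in one structural way: the paper never breaks the congruence, because it takes every new partial quotient $a_{m+1}$ to be a large multiple of $b$, so that $p_{m+1}+a_0q_{m+1}\equiv p_{m-1}+a_0q_{m-1}\pmod{b}$ automatically holds and no repair suffix is needed; since multiples of $bq_m$ still hit any target denominator $q_{m+1}$ to within negligible relative error, the ``big jump then repair suffix'' alternation you propose is extra machinery rather than a gain. That said, your route could in principle be pushed through, but as written it has a genuine gap at the central compatibility claim. Writing $r_n:=D_n/(Kc^{n+1})$, your lower endpoint is $(1/c)^{q_{N_k}+1}r_n^{-(q_{N_k}+1)/(n+1)}$, so nonemptiness against the upper endpoint $(1/c)^{q_{N_k}+1}/(2K)$ requires $r_n^{(q_{N_k}+1)/(n+1)}>2K$. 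The bare hypothesis $D_n>Kc^{n+1}$, i.e.\ $r_n>1$, gives no such conclusion when $r_n$ is close to $1$, so ``simultaneously satisfiable precisely when $D_n>Kc^{n+1}$'' is false as stated. To close the induction you must strengthen the invariant to $D_n\geq\kappa K c^{n+1}$ for a fixed $\kappa>1$, aim $q_{N_k+1}$ strictly inside the window by the factor $\kappa$ (so that the newly created margin at $n=q_{N_k}$ is again $\geq\kappa$), and use that $(q_{N_k}+1)/(q_{N_{k-1}}+1)$ is at least as large as the preceding jump $A_{k-1}$, so that $\kappa^{(q_{N_k}+1)/(n+1)}>2K$ holds for all $n\leq q_{N_{k-1}}$. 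None of these three points is in your write-up.

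A second, smaller omission is the seed range $1\leq n\leq q_{N_0}$: your base case treats only $n=0$, your intermediate-index argument relies on partial quotients bounded by $b$ (which the prefix coming out of Lemma \ref{le:minor-lemma-ctd-frn} need not satisfy) and so only kicks in from $n\geq q_{N_0+1}$, and the step-$0$ constraint only pins down $D_{q_{N_0}}$. For $1\leq n<q_{N_0}$ one does have $D_n>1/(2q_{j+1})$ where $q_j\leq n<q_{j+1}$ with $j<N_0$, and these finitely many $q_{j+1}$ are determined by the prefix before $c$ is chosen, so a sufficiently small $c$ does cover them --- but this step needs to be stated, and at present it is not.
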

\begin{proof}
By replacing $\varepsilon>0$ with a smaller value if necessary, we may ensure that $|\theta-\theta_0|<\varepsilon$ implies that $\theta \in (-\frac{b}{2},\frac{b}{2})$. By Lemma \ref{le:minor-lemma-ctd-frn} there exist integers $N \geq 1$ and $a_0 \in \Z$, $a_1,\ldots,a_N \in \mathbb{N}$ such that every infinite extension $(a_n)_{n=0}^\infty$ of the finite sequence $(a_n)_{n=0}^N$ has the property that $|[a_0;a_1,\ldots]-\theta_0|<\varepsilon$, and such that the associated sequence of convergents $(p_k/q_k)_{k=0}^N$ satisfies
\[p_N+a_0q_N \equiv p_{N-1}+a_0q_{N-1}\equiv a \mod b.\]
We will show that this sequence $(a_n)_{n=0}^N$ may be extended to an infinite sequence $(a_n)_{n=0}^\infty$ in such a way that the associated number $\theta:=[a_0;a_1,\ldots]$ satisfies properties \eqref{eq:first-lemma-bit} and \eqref{eq:second-lemma-bit}.

Let $a_{N+1}$ be an integer which is divisible by $b$ and is so large that the integer $q_{N+1}:=a_{N+1}q_N+q_{N-1}$ satisfies
\begin{equation}\label{eq:bewgry}q_{N+1}^{\frac{1}{1+q_N}}> 2Kq_N>(2Kq_N)^{\frac{1}{1+q_{N-1}}}.\end{equation}
 Since $a_{N+1}$ is a multiple of $b$, and by hypothesis $p_N+a_0q_N \equiv p_{N-1}+a_0q_{N-1}\equiv a \mod b$, the equation
 \[p_{N+1}+a_0q_{N+1}=a_{N+1}(p_N+a_0q_N)+p_{N-1}+a_0	q_{N-1}\]
 implies
 \[p_{N+1}+a_0q_{N+1} \equiv p_N+a_0q_N \equiv a \mod b.\]
 Now let $C>1$ be so large that
 \[q_{N+1} \leq C^{1+q_N}.\]
 We claim that there exists an infinite sequence $(a_n)_{n=0}^\infty$ which extends the finite sequence $a_0,a_1,\ldots,a_{N+1}$ such that for each $n \geq N+1$,
 \begin{equation}\label{eq:induct-1}p_{n}+a_0q_{n} \equiv p_{n-1}+a_0q_{n-1}\equiv a \mod b\end{equation}
and
\begin{equation}\label{eq:induct-2}(2Kq_{n-1})^{\frac{1}{1+q_{n-2}}}<q_n^{\frac{1}{1+q_{n-1}}} \leq C(4Kb)^{\sum_{k=N}^{n-2}\frac{1}{1+q_k}}.\end{equation}
The sequence is constructed inductively. Let us assume that the sequence $(a_n)_{n=0}^m$ has been constructed in such a way that the above properties are satisfied for every $n$ in the range $N+1 \leq n \leq m$. (Clearly this already holds in the particular case $m=N+1$.) We will show that an integer $a_{m+1}$ may be chosen such that the extended sequence $(a_n)_{n=0}^{m+1}$ has the above properties for all integers $n$ in the range $N+1\leq n\leq m+1$.

 Given the sequence $(a_n)_{n=0}^m$, we define the integer $a_{m+1}$ by 
\[a_{m+1}:=b\left\lceil q_{m}^{-1}(2Kq_{m})^{\frac{1+q_{m}}{1+q_{m-1}}}\right\rceil.\]
Since
\[p_{m+1}+a_0q_{m+1}=a_{m+1}(p_m +a_0q_m)+p_{m-1}+a_0q_{m-1}\]
 and $b$ divides $a_{m+1}$, we have
 \[p_{m+1}+a_0q_{m+1} \equiv p_{m-1}+a_0q_{m-1} \equiv a \mod b,\]
 and we know already that $p_m +a_0q_m\equiv a \mod b$, so the property \eqref{eq:induct-1} is also satisfied for $n:=m+1$. For the integer $q_{m+1}:=a_{m+1}q_m+q_{m-1}$ we have the inequality
\begin{align*}q_{m+1}^{\frac{1}{1+q_m}}& \leq \left(q_{m-1}+bq_{m}+b(2Kq_{m})^{\frac{1+q_{m}}{1+q_{m-1}}}\right) ^{\frac{1}{1+q_{m}}}\\
&<\left(\left(\left(2Kb+b+1\right)q_m\right)^{\frac{1+q_m}{1+q_{m-1}}}\right)^{\frac{1}{1+q_m}}\\
&<\left(4Kbq_m\right)^{\frac{1}{1+q_{m-1}}}\leq C(4Kb)^{\sum_{k=N}^{m-1} \frac{1}{1+q_k}}\end{align*}
where we have used the induction hypothesis to obtain the upper bound 
\[q_m^{\frac{1}{1+q_{m-1}}}\leq C(4Kb)^{\sum_{n=N}^{m-2}\frac{1}{1+q_k}}.\]
 The second inequality of \eqref{eq:induct-2} is thus satisfied for $n=m+1$. Finally we have
\begin{align*}q_{m+1}^{\frac{1}{1+q_m}} &=\left(bq_m\left\lceil q_{m}^{-1}(2Kq_{m})^{\frac{1+q_{m}}{1+q_{m-1}}}\right\rceil + q_{m-1}\right)^{\frac{1}{1+q_m}}\\
&>b^{\frac{1}{1+q_m}} (2Kq_m)^{\frac{1}{1+q_{m-1}}}\\
&>(2Kq_m)^{\frac{1}{1+q_{m-1}}}\end{align*}
so that the left-hand side of \eqref{eq:induct-2} is also satisfied for $n=m+1$. We conclude that the choice of $a_{m+1}$ given above provides the requisite extension of the finite sequence $(a_n)_{n=0}^m$, and by induction we deduce the existence of the desired infinite sequence $(a_n)_{n=0}^\infty$. For the remainder of the proof we fix the sequence $(a_n)_{n=0}^\infty$ defined in the preceding manner.

Let us show that the number $\theta:=[a_0;a_1,a_2,\ldots]$ thus constructed has the properties required in the statement of the lemma. By \cite[Theorem 9]{Kh97} we have for every $n \geq 0$
\begin{equation}\label{eq:libel}\left|q_n(\theta-a_0) -p_n\right|<\frac{1}{q_{n+1}}.\end{equation}
If $n \geq N$ then necessarily $q_{n+1}\geq q_2=a_2q_1+1\geq 2$, so \eqref{eq:induct-1} and \eqref{eq:libel} together imply that the nearest integer to $q_n\theta$ is congruent to $a$ modulo $b$. For $n \geq N$ we thus have
\begin{equation}\label{eq:bewgry2}\mathrm{dist}\left(q_n\theta,a+b\mathbb{Z}\right)^{\frac{1}{1+q_n}} =\left|q_n(\theta-a_0) -p_n\right|^{\frac{1}{1+q_n}}<\left(\frac{1}{q_{n+1}}\right)^{\frac{1}{1+q_n}}<\left(\frac{1}{2Kq_{n}}\right)^{\frac{1}{1+q_{n-1}}}\end{equation}
using \eqref{eq:induct-2}.

We now make the following claim: if $k \geq 0$ is any integer, and $n \geq -1$ is the unique integer such that $q_n\leq k <q_{n+1}$, then
\[\mathrm{dist}(k\theta,a+b\Z)>\frac{1}{2q_{n+1}}.\]
We consider several cases. Firstly, if $k=0$ then $n=-1$ and we have
\[\mathrm{dist}(k\theta,a+b\Z)=\min\{|a|,|b-a|\}\geq 1>\frac{1}{2}= \frac{1}{2q_{n+1}}\]
as required. Suppose next that $k \geq 1$. Let $\ell \in \Z$ such that $\mathrm{dist}(k\theta,a+b\Z)=|k\theta-\ell|$, and let $d\geq 1$ denote the greatest common divisor of $k$ and $|\ell|$. Let $k=k'd$, $\ell=\ell'd$ where $k'$ and $\ell'$ are coprime, and let $m \geq 0$ such that $q_m \leq k'< q_{m+1}$. If $k'> q_m$ then by \cite[Theorem 19]{Kh97} we have $|k'\theta-\ell'|\geq 1/2k'$ and therefore
\[\mathrm{dist}(k\theta,a+b\Z) =|k\theta-\ell|=d|k'\theta-\ell'|\geq \frac{d}{2k'}\geq \frac{1}{2k}>\frac{1}{2q_{n+1}}\]
as required. If on the other hand $k'=q_m$ then since $k'\geq 1$ we have $q_{m+1}>1$ and therefore $q_{m+1} \geq 2$. It follows using \eqref{eq:libel} that the nearest integer to $q_m\theta$ is $p_m+a_0q_m$. By \cite[Theorem 13]{Kh97} we may therefore deduce
\[\mathrm{dist}(k\theta,a+b\Z) =|k\theta-\ell|=d|q_m\theta-\ell'|\geq d|q_m(\theta-a_0)-p_m|>\frac{d}{q_{m+1}+q_m}>\frac{1}{2q_{n+1}}\]
and this completes the proof of the claim.

We may now prove that $\theta$ has the properties required in the statement of the lemma. Let $k \geq 0$ be an integer, and let $n \geq -1$ such that $q_n \leq k <q_{n+1}$. We will show that there exists $q \geq \max\{q_{N},k+1\}$ such that
\[\mathrm{dist}(k\theta,a+b\Z)^{\frac{1}{1+k}}>K^{\frac{1}{1+k}} \mathrm{dist}(q\theta,a+b\Z)^{\frac{1}{1+q}}.\]
We consider two cases. If $n<N$, then by the previous claim together with \eqref{eq:bewgry} we have
\begin{align*}\mathrm{dist}(k\theta,a+b\Z)^{\frac{1}{1+k}}>\left(\frac{1}{2q_{n+1}}\right)^{\frac{1}{1+k}}&\geq \frac{1}{2q_{N}}\\
&>K\left(\frac{1}{q_{N+1}}\right)^{\frac{1}{1+q_{N}}}\geq K^{\frac{1}{1+k}}\left(\frac{1}{q_{N+1}}\right)^{\frac{1}{1+q_{N}}}\end{align*}
and therefore
\[\mathrm{dist}(k\theta,a+b\Z)^{\frac{1}{1+k}}>K^{\frac{1}{1+k}}\left(\frac{1}{q_{N+1}}\right)^{\frac{1}{1+q_{N}}}>K^{\frac{1}{1+k}}\mathrm{dist}(q_N\theta,a+b\Z)^{\frac{1}{1+q_N}}\]
where we have used \eqref{eq:bewgry2}. If on the other hand $n \geq N$, then by the previous claim together with \eqref{eq:bewgry2}
\begin{align*}\mathrm{dist}(k\theta,a+b\Z)^{\frac{1}{1+k}}&>\left(\frac{1}{2q_{n+1}}\right)^{\frac{1}{1+k}}\\
&\geq K^{\frac{1}{1+k}}\left(\frac{1}{2Kq_{n+1}}\right)^{\frac{1}{1+q_n}}\\
&>K^{\frac{1}{1+k}}\mathrm{dist}(q_{n+1}\theta,a+b\Z)^{\frac{1}{1+q_{n+1}}}.\end{align*}
This establishes the first of the two properties required in the statement of the lemma. It remains to prove \eqref{eq:second-lemma-bit}. We note that the preceding argument implies the inequality
\[\inf_{k \geq 0}\mathrm{dist}(k\theta,a+b\Z)^{\frac{1}{1+k}} \geq \inf_{n \geq N} \left(\frac{1}{2q_{n+1}}\right)^{\frac{1}{1+q_{n}}},\]
so to establish  \eqref{eq:second-lemma-bit} it is sufficient to prove the inequality
\[\inf_{n \geq N}\left(\frac{1}{q_{n+1}}\right)^{\frac{1}{1+q_n}}>0.\]
 By \cite[Theorem 12]{Kh97} we have $q_n \geq 2^{\frac{n-1}{2}}$ for every $n \geq 0$ so in particular
 \[\sum_{k=N}^\infty \frac{1}{1+q_k} <\sum_{k=1}^\infty \frac{1}{1+2^{\frac{k-1}{2}}}<\sum_{k=0}^\infty 2^{-\frac{k}{2}}<4.\]
 If $n\geq N$ then by \eqref{eq:induct-2}
\[\left(\frac{1}{q_{n+1}}\right)^{\frac{1}{q_{n}}} \geq C^{-1}\left(4Kb\right)^{-\sum_{k=N}^{n-1}\frac{1}{1+q_k}}>C^{-1}\left(4Kb\right)^{-\sum_{k=N}^{\infty}\frac{1}{1+q_k}}>C^{-1}(4Kb)^{-4}>0\]
and since the last two terms do not depend on $n$,  the infimum in question is nonzero. This completes the proof of \eqref{eq:second-lemma-bit} and hence completes the proof of the lemma.\end{proof}

We may now give the proof of Proposition \ref{pr:ctdfrn}, and deduce from it the result which will be used in the following section. 
\begin{proof}[Proof of Proposition \ref{pr:ctdfrn}]
Let $\tilde{\alpha},\tilde{\theta} \in \mathbb{R}$, and let $\varepsilon>0$; we will define $\alpha,\theta$ such that $|\alpha-\tilde\alpha|$ and $|\theta -\tilde\theta|$ are smaller than $\varepsilon$ and such that $(H_\alpha,R_\theta)$ has positive lower spectral radius but lacks the lower finiteness property. Since $R_\phi=R_{\phi+2\pi n}$ for every $\phi \in \R$ and $n \in \Z$ we may freely assume that $\tilde{\theta}\in(-\frac{\pi}{2},\frac{\pi}{2}]$, and by making an arbitrarily small adjustment to $\tilde\theta$ if necessary we may also assume that $\pi^{-1}\tilde\theta$ is irrational.
 
    Since the set of rationals with prime denominator is dense in $\R$, and the function $\tan \colon (-\frac{\pi}{2},\frac{\pi}{2})\to\R$ is a homeomorphism, we may choose $\alpha \in \R$ such that $|\alpha-\tilde{\alpha}|<\varepsilon$ and $\lambda\alpha^{-1} =-\tan(\frac{\pi a}{b})$ for some nonzero integer $a$ and prime natural number $b\geq 5$ such that $\frac{a}{b} \in (-\frac{1}{2},\frac{1}{2})$.  By Lemma \ref{le:ratio} we may choose a constant $C_0>1$ such that
\begin{equation}\label{eq:nice-1}C_0\left|\lambda \cos \phi + \alpha\sin \phi\right| \geq \mathrm{dist}\left(\phi,\frac{\pi a}{b}+\pi\mathbb{Z}\right) \geq C_0^{-1}\left|\lambda \cos \phi + \alpha\sin \phi\right| \end{equation}
for every $\phi \in \mathbb{R}$.  By Lemma \ref{le:main-lemma-ctd-frn} there exist an infinite sequence of integers $(a_n)_{n=0}^\infty$ with $a_n \geq 1$ for all $n \geq 1$, and a real number $\delta>0$, such that the irrational number $\vartheta:=[a_0;a_1,\ldots,] \in (-\frac{b}{2},\frac{b}{2})$ satisfies $|\vartheta-b\pi^{-1}\tilde{\theta}|<\varepsilon b\pi^{-1}$, and such that the inequality
\begin{equation}\label{eq:nice-2}\mathrm{dist}(n\vartheta,a+b\Z)^{\frac{1}{1+n}}>\left(\frac{C_0b}{\pi}\right)^{\frac{2}{1+n}}\left(\inf_{m> n}\mathrm{dist}(m\vartheta,a+b\Z)^{\frac{1}{1+m}}\right)\geq \delta>0\end{equation}
holds for every integer $n \geq 0$, where we have used $C_0b/\pi \geq 5C_0/\pi>C_0>1$. 

Define $\theta:=\frac{\pi\vartheta}{b} \in (-\frac{\pi}{2},\frac{\pi}{2})$ so that $|\theta-\tilde\theta|<\varepsilon$. 
Since $\vartheta$ is irrational, $\theta$ is not a rational multiple of $\pi$, so we have
\[\inf_{n \geq 0}\rho\left(H_\alpha R_\theta^n\right)^{\frac{1}{1+n}}=\inf_{n \geq 0}|\lambda \cos n\theta + \alpha \sin n\theta|^{\frac{1}{1+n}}<1=\rho(R_\theta)\]
using Lemmas \ref{le:explicittrace} and \ref{le:ae-lemma-1}. If $(H_\alpha,R_\theta)$ has the lower finiteness property then necessarily $\underline{\varrho}(H_\alpha,R_\theta)=\rho(H_\alpha R_\theta^n)^{1/(n+1)}$ for some integer $n \geq 0$, since otherwise Lemma \ref{le:newformula} together with the previous inequality yields a contradiction. To show that $(H_\alpha,R_\theta)$ does not have the lower finiteness property it is therefore sufficient to show that the infimum
\[\inf_{n \geq 0}\rho\left(H_\alpha R_\theta^n\right)^{\frac{1}{n+1}}\]
is not attained. By Lemma \ref{le:formula}, to show that $\underline{\varrho}(H_\alpha,R_\theta)>0$  it is sufficient to show that the same infimum is nonzero.

Let us first prove that this infimum is not attained. Given $n \geq 0$, using Lemma \ref{le:explicittrace}, \eqref{eq:nice-1} and \eqref{eq:nice-2} we may find $m>n$ such that 
\begin{align*}\rho\left(H_{\alpha}R_{\theta}^n\right)^{\frac{1}{1+n}}&=\left|\lambda \cos n\theta+\alpha_0\sin n\theta \right|^{\frac{1}{1+n}}\\
&\geq C_0^{-\frac{1}{1+n}}\mathrm{dist}\left(n\theta,\frac{\pi a}{b}+\pi\mathbb{Z}\right)^{\frac{1}{1+n}}\\
&= \left(\frac{\pi}{C_0b}\right)^{\frac{1}{1+n}} \mathrm{dist}(n\vartheta,a+b\mathbb{Z})^{\frac{1}{1+n}}\\
&>  \left(\frac{C_0b}{\pi}\right)^{\frac{1}{1+n}}\mathrm{dist}(m\vartheta,a+b\mathbb{Z})^{\frac{1}{1+m}}\\
&>  \left(\frac{C_0b}{\pi}\right)^{\frac{1}{1+m}}\mathrm{dist}(m\vartheta,a+b\mathbb{Z})^{\frac{1}{1+m}}\\
&=C_0^{\frac{1}{1+m}}\mathrm{dist}\left(m\theta,\frac{\pi a}{b}+\pi\mathbb{Z}\right)^{\frac{1}{1+m}}\\
&\geq \left|\lambda \cos m\theta+\alpha_0 \sin m\theta\right|^{\frac{1}{1+m}}=\rho\left(H_{\alpha}R_{\theta}^m\right)^{\frac{1}{1+m}}\end{align*}
proving our assertion. To see that $\underline{\varrho}(H_\alpha,R_\theta)$ is nonzero we note that by a similar chain of inequalities
\[\underline{\varrho}(H_\alpha,R_\theta)=\inf_{n \geq 0}\rho(H_\alpha,R_\theta^n)^{\frac{1}{1+n}}\geq \inf_{n \geq 0} \left(\frac{\pi}{C_0b}\right)^{\frac{1}{1+n}}\mathrm{dist}(n\vartheta,a+b\Z)^{\frac{1}{1+n}}\geq \frac{\pi\delta}{C_0b}>0.\]
The proof of the proposition is complete.
\end{proof}

We now easily deduce the following result which will be used in the proof of Theorem \ref{th:only}.

\begin{corollary}\label{co:dense-ctd-frn}
The set of all $(H,R) \in \mathcal{P}\times\mathcal{E}$ such that $\underline{\varrho}(H,R)>0$ and such that $(H,R)$ does not have the lower finiteness property is a dense subset of $\mathcal{P}\times\mathcal{E}$.
\end{corollary}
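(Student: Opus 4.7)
The plan is to reduce density on $\mathcal{P}\times\mathcal{E}$ to density of the one-parameter family $(\alpha,\theta)\mapsto(H_\alpha,R_\theta)$ handled by Proposition \ref{pr:ctdfrn}, using Lemma \ref{le:similar} as the transfer mechanism. Fix an arbitrary $(H,R)\in\mathcal{P}\times\mathcal{E}$ and a desired approximation tolerance. By Lemma \ref{le:similar} there exist a scalar $\gamma>0$, an invertible matrix $A$, and real numbers $\lambda\neq 0$, $\alpha$, $\theta$ such that $\gamma^{-1}(H,R) = (AH_\alpha A^{-1}, AR_\theta A^{-1})$, where $H_\alpha$ and $R_\theta$ have the normalised form of Proposition \ref{pr:ctdfrn}.

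Next, I invoke Proposition \ref{pr:ctdfrn} with the same value of $\lambda$ to produce a pair $(\alpha',\theta')\in\mathbb{R}^2$ arbitrarily close to $(\alpha,\theta)$ such that $\underline{\varrho}(H_{\alpha'},R_{\theta'})>0$ and $(H_{\alpha'},R_{\theta'})$ fails the lower finiteness property. I then define
\[\tilde H := \gamma A H_{\alpha'} A^{-1},\qquad \tilde R := \gamma A R_{\theta'} A^{-1}.\]
Since the maps $\alpha'\mapsto H_{\alpha'}$ and $\theta'\mapsto R_{\theta'}$ are continuous and conjugation by the fixed invertible matrix $A$ together with multiplication by the fixed scalar $\gamma$ is continuous on $M_2(\mathbb{R})^2$, the pair $(\tilde H,\tilde R)$ can be made arbitrarily close to $(H,R)$ by choosing $(\alpha',\theta')$ sufficiently close to $(\alpha,\theta)$. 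It remains to observe that $(\tilde H,\tilde R)$ still lies in $\mathcal{P}\times\mathcal{E}$ (since rank one and nilpotency are preserved under conjugation and nonzero scaling, as is having non-real eigenvalues) and that it inherits the desired pathological behaviour.

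The last point is the only thing to check carefully, and it is essentially routine: the lower spectral radius satisfies $\underline{\varrho}(cA_1,cA_2)=|c|\underline{\varrho}(A_1,A_2)$ and $\underline{\varrho}(BA_1B^{-1},BA_2B^{-1})=\underline{\varrho}(A_1,A_2)$ for any invertible $B$ and nonzero $c$, since every product $(BA_1B^{-1})^{n_1}\cdots$ is conjugate to $cA_{i_1}\cdots cA_{i_n} = c^n A_{i_1}\cdots A_{i_n}$ up to the scaling, and likewise $\rho(BA_{i_1}\cdots A_{i_n}B^{-1})^{1/n}=\rho(A_{i_1}\cdots A_{i_n})^{1/n}$. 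Consequently $\underline{\varrho}(\tilde H,\tilde R)=\gamma\underline{\varrho}(H_{\alpha'},R_{\theta'})>0$, and a product $\tilde A_{i_1}\cdots\tilde A_{i_n}$ attains $\underline{\varrho}(\tilde H,\tilde R)$ via its normalised spectral radius if and only if the corresponding product $H_{\alpha'}^{\epsilon_1}R_{\theta'}^{\epsilon_2}\cdots$ attains $\underline{\varrho}(H_{\alpha'},R_{\theta'})$ in the same way; hence the lower finiteness property fails for $(\tilde H,\tilde R)$. This shows that every point of $\mathcal{P}\times\mathcal{E}$ can be approximated arbitrarily well by a pair with positive lower spectral radius and no lower finiteness property, which is exactly the density claim. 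No single step is genuinely hard: the substantive work is already contained in Proposition \ref{pr:ctdfrn}, and the remainder is a bookkeeping exercise showing that the normalising transformations of Lemma \ref{le:similar} are compatible with the properties we care about.
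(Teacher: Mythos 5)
Your argument is correct and is essentially the same as the paper's: both reduce to Proposition \ref{pr:ctdfrn} via the normal form supplied by Lemma \ref{le:similar}, together with the invariance of $\underline{\varrho}$ and of the lower finiteness property under simultaneous conjugation and scaling. The only cosmetic difference is that the paper packages the transfer as a single continuous surjection $\Phi$ from the parameter space onto $\mathcal{P}\times\mathcal{E}$ and notes that such a map carries dense sets to dense sets, whereas you unwind the same reasoning into a pointwise approximation.
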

\begin{proof}
We argue similarly to the proof of Proposition \ref{pr:full-measure-2}. Let us define $\Xi \subseteq SL_2^\pm(\R)\times (\mathbb{R}\setminus\{0\})^2\times \R^2$ to be the set of all $(A,\gamma,\lambda,\alpha,\theta)$ such that the pair
\[\left\{\left(\begin{array}{cc}\lambda & \alpha \\0&0\end{array}\right),\left(\begin{array}{cc}\cos \theta & -\sin \theta\\\sin\theta&\cos \theta\end{array}\right)\right\}\]
has nonzero lower spectral radius but fails to have the lower finiteness property.  By Proposition \ref{pr:ctdfrn} this set is a dense subset of $SL_2^\pm(\R)\times(\mathbb{R}\setminus\{0\})^2\times \R^2$. If we define $\Phi \colon SL_2^\pm(\R)\times(\mathbb{R}\setminus\{0\})^2\times \R^2 \to \mathcal{P}\times\mathcal{E}$ by
\[\Phi(A,\gamma,\lambda,\alpha,\theta) :=\gamma\left(A\left(\begin{array}{cc}\lambda & \alpha \\0&0\end{array}\right)A^{-1},A \left(\begin{array}{cc}\cos \theta & -\sin \theta\\\sin\theta&\cos \theta\end{array}\right)A^{-1}\right)\]
then by Lemma \ref{le:similar} the function $\Phi$ is surjective. Clearly $\Phi$ is also continuous. It is straightforward to see that if $(H,R) \in \Phi(\Xi)$ then $(H,R)$ has nonzero lower spectral radius and does not have the lower finiteness property, and since $\Phi$ is continuous and surjective, $\Phi(\Xi)$ is dense in $\mathcal{P}\times\mathcal{E}$.\end{proof}

\section{Proof of Theorem \ref{th:only}}

We first claim that $\underline{\varrho} \colon \mathcal{P}\times\mathcal{E}\to\R$ is continuous at $(H,R)$ if and only if $\underline{\varrho}(H,R)=0$. On the one hand, by Proposition \ref{pr:full-measure-2} the set of all $(H,R) \in \mathcal{P}\times\mathcal{E}$ such that $\underline{\varrho}(H,R)>0$ has full Lebesgue measure, and in particular this set is dense. On the other hand by Lemma \ref{le:dense-zeros} the set of all $(H,R) \in \mathcal{P}\times \mathcal{E}$ such that $\underline{\varrho}(H,R)=0$ is also dense. In particular if $(H,R) \in \mathcal{P}\times\mathcal{E}$ and $\underline{\varrho}(H,R)>0$ then $(H,R)$ is a point of discontinuity of $\underline{\varrho}$; but by Lemma \ref{le:zero-cont}, if $(H,R) \in \mathcal{P}\times\mathcal{E}$ and $\underline{\varrho}(H,R)=0$ then $\underline{\varrho}$ is continuous at $(H,R)$, which proves the claim. We deduce from the claim that $\mathcal{P}\times\mathcal{E}$ is equal to the disjoint union of the four sets $\mathcal{U}_i$.

By Lemma \ref{le:dense-zeros}, Corollary \ref{co:dense-ctd-frn} and the above considerations, the sets $\mathcal{U}_2$ and $\mathcal{U}_3$ are dense in $\mathcal{P}\times\mathcal{E}$. For every $n \geq 1$ the set
\[X_n:=\mathrm{Int }\left\{(H,R) \in \mathcal{P}\times\mathcal{E}\colon \underline{\varrho}(H,R)<\frac{1}{n}\right\}\]
is by definition open, and by Lemma \ref{le:zero-cont} this set contains every zero of $\underline{\varrho}$ in $\mathcal{P}\times\mathcal{E}$. By Lemma \ref{le:dense-zeros} each $X_n$ is dense. On the other hand, for each $n_1,\ldots,n_k \geq 0$ and $m_1,\ldots,m_{k} \geq 0$, by Lemma \ref{le:notmanyzeros} the set
\[Y_{(n_1,\ldots,n_k;m_1,\ldots,m_k)}:=\left\{(H,R) \in \mathcal{P}\times\mathcal{E} \colon H^{n_k}R^{m_k}\cdots H^{n_1}R^{m_1} \neq 0\right\}\]
is open and dense. If there exists a product $H^{n_k}R^{m_k}\cdots H^{n_1}R^{m_1}$ with zero spectral radius then by the Cayley-Hamilton theeorem the product $(H^{n_k}R^{m_k}\cdots H^{n_1}R^{m_1} )^2$ must be zero, so the set
\[Y:=\left\{(H,R) \in \mathcal{P}\times\mathcal{E}\colon \text{ no product of }H\text{ and }R\text { is zero}\right\}\]
is equal to the set
\[\bigcap_{k=1}^\infty \bigcap_{\substack{n_1,\ldots,n_k \geq0\\m_1,\ldots,m_k \geq 0}} Y_{(n_1,\ldots,n_k,m_1,\ldots,m_k)}\]
which is the intersection of countably many open dense sets. Since furthermore
\[\mathcal{U}_1= Y \cap \bigcap_{n=1}^\infty X_n\]
we conclude that $\mathcal{U}_1$ is equal to the intersection of countably many open dense subsets of $\mathcal{P}\times\mathcal{E}$. In particular it is dense by Baire's theorem. We finally note that the set $\mathcal{U}_4$ has full Lebesgue measure by Proposition \ref{pr:full-measure-2}, and this completes the proof of Theorem \ref{th:only}.

\section{Acknowledgements}

This research was supported by EPSRC grant  EP/L026953/1.

\bibliographystyle{siam}
\bibliography{beanspirit}
\end{document}